\pgfplotsset{compat=1.17}
\numberwithin{equation}{section}
\theoremstyle{definition}
\Crefname{equation}{eq.}{eqs.}% {<type>}{<singular>}{<plural>}
\Crefname{ALC@unique}{Line}{Lines}
\Crefname{line}{line}{lines}
\newtheorem{definition}{Definition}[section] 
\theoremstyle{plain}
\newtheorem{theorem}{Theorem}[section]
\newtheorem{proposition}{Proposition}[section]
\newtheorem{corollary}{Corollary}[theorem]
\newtheorem{lemma}[theorem]{Lemma}
\theoremstyle{remark}
\newtheorem{remark}{Remark}
\newcommand{\eps}{\varepsilon}
\newcommand{\abs}[1]{\vert {#1} \vert}
\newcommand{\rank}{\operatorname{rank}}
\newcommand{\ip}[2]{\left\langle {#1},\ {#2} \right\rangle}
\newcommand{\norm}[1]{\Vert {#1} \Vert}
\newcommand{\normsq}[1]{\norm{#1}^2}
\newcommand{\enorm}[1]{\norm{#1}_2}
\newcommand{\fnorm}[1]{\norm{#1}_{\mathrm{F}}}
\newcommand{\bigO}{\mathrm{O}}
\newcommand{\Rn}[1]{\mathbb{R}^{#1}}
\newcommand{\Cn}[1]{\mathbb{C}^{#1}}
\newcommand{\Rnm}[2]{\mathbb{R}^{#1 \times #2}}
\renewcommand{\Cn}[1]{\mathbb{C}^{#1}}
\newcommand{\Om}{\Omega} 
\newcommand{\Omperp}{\perp^{\Om}}
\newcommand{\Omperpornot}{\perp^{(\Om)}}
\newcommand{\K}{\mathcal{K}} 
\newcommand{\Lspace}{\mathcal{L}}
\newcommand{\POm}{P_{\K}^{\Om}}
\newcommand{\A}{A} 
\newcommand{\Hsa}{H_\sa} 
\newcommand{\Hba}{H_{\ba}} 
\newcommand{\tHba}{\tilde{H}_{\ba}} 
\newcommand{\Bsa}{B_\sa} 
\newcommand{\Bba}{B_{\ba}} 
\newcommand{\tBsa}{\tilde{B}_\sa} 
\newcommand{\Tsa}{T_\sa} 
\newcommand{\Tba}{T_{\ba}} 
\newcommand{\tTba}{\tilde{T}_{\ba}} 
\newcommand{\tTsa}{\tilde{T}_\sa} 
\newcommand{\V}{V_{\sa}} 
\newcommand{\Vba}{V_{\ba}} 
\newcommand{\tVba}{\tilde{V}_{\ba}} 
\newcommand{\U}{U_{\sa}} 
\newcommand{\Uba}{U_{\ba}} 
\newcommand{\tU}{\tilde{U}_{\sa}} 
\newcommand{\tUba}{\tilde{U}_{\ba}} 
\newcommand{\Ssa}{S_{\sa}}
\newcommand{\Sba}{S_{\ba}}
\newcommand{\tSsa}{\tilde{S}_{\sa}}
\newcommand{\slast}{s_{\sa+1}}
\newcommand{\sba}{s_{\ba+1}}
\newcommand{\tslast}{\tilde{s}_{\sa+1}}
\newcommand{\eivec}{x} 
\newcommand{\eival}{\lambda} 
\newcommand{\rivec}{\tilde{\eivec}} 
\newcommand{\yrivec}{{y}} 
\newcommand{\rival}{\tilde{\eival}} 
\newcommand{\uinit}{u_1} 
\newcommand{\uinitj}{u_1^+} 
\newcommand{\ulast}{u_{\sa+1}} 
\newcommand{\uba}{u_{\ba+1}} 
\newcommand{\tulast}{\tilde{u}_{\sa+1}} 
\newcommand{\blast}{b_{\sa}} 
\newcommand{\bba}{b_{\ba}} 
\newcommand{\tblast}{\tilde{b}_{\sa}} 
\newcommand{\tbba}{\tilde{b}_{\ba}}
\newcommand{\tsp}{^T}
\newcommand{\sa}{k} 
\newcommand{\nshi}{p} 
\newcommand{\ba}{\sa + \nshi} 
\newcommand{\dsk}{d}
\renewcommand{\epsilon}{\varepsilon}
\newcommand{\rKS}{randomized Krylov-Schur}
\newcommand{\rArno}{randomized Arnoldi}
\DeclareMathOperator{\Sp}{span}
\DeclareMathOperator{\diag}{diag}
\newcommand{\Span}[1]{\Sp \{ #1 \} }
\newcommand{\ndim}{n}
\newenvironment{keywords}
 {\par\textbf{Keywords.}\ \ignorespaces}
 {\par\medskip}
\newenvironment{AMS}
 {\par\textbf{AMS subject classification.}\ \ignorespaces}
 {\par\medskip}
\title{Randomized Krylov-Schur eigensolver with deflation}
\author{Jean-Guillaume de Damas\thanks{Sorbonne Universite, Inria, Universite de Paris Laboratoire Jacques-Louis Lions, Paris, France}, Laura Grigori\thanks{Laboratory for Simulation and Modelling, Paul Scherrer Institute, Switzerland; Institute of Mathematics, EPFL, Switzerland}}
\begin{document}

\maketitle

\begin{abstract}
   This work introduces a novel algorithm to solve large-scale eigenvalue problems and seek a small set of eigenpairs. The method, called randomized Krylov-Schur (rKS), has a simple implementation and benefits from fast and efficient operations in low-dimensional spaces, such as sketch-orthogonalization processes and stable reordering of Schur factorizations. It also includes a practical deflation technique for converged eigenpairs, enabling the computation of the eigenspace associated with a given part of the spectrum. Numerical experiments are provided to demonstrate the scalability and accuracy of the method.
\end{abstract}

\begin{keywords}
    non-symmetric eigenvalue problem, randomization, Krylov subspace methods, Krylov-Schur, deflation
\end{keywords}

\begin{AMS}
    65F15, 65F25, 65F50, 15B52
\end{AMS}

\section{Introduction}
The focus of this work is to solve the eigenvalue problem $\A \eivec_i = \eival_i  \eivec_i$ for a given matrix $\A \in \Rnm{n}{n}$ and a small subset of eigenpairs indexed by $i = 1,\dots,\sa$. The eigenvalues $\eival_i$ and eigenvectors $\eivec_i$ provide critical insights into the properties of a wide range of systems and phenomena. 
In physics, eigenvalues describe natural frequencies in mechanical vibrations and quantum systems. In engineering, they help optimize designs and analyze the stability of structures and control systems. Eigenvectors are fundamental in data science for techniques such as Principal Component Analysis (PCA) that allows reducing dimensionality and identifying patterns in large datasets. Several domain decomposition methods also rely on solving eigenvalue problems in subdomains to accelerate the solving of linear systems of equations arising in areas such as fluid dynamics and electromagnetic. See \cite{Pain2005PhysicsVibrationsWaves,Lin2020stateartreview,Jolliffe2016Principalcomponentanalysis,Nataf2024GenEODomainDecomposition} for these applications. The eigenvalue problem has been extensively studied in numerical linear algebra over the past decades; see e.g. \cite{Wilkinson1988algebraiceigenvalueproblem,saad2011numerical,Kressner2005NumericalMethodsGeneral,Golub2000Eigenvaluecomputation20th}.  In this paper, we consider in particular the restarted Krylov framework, notably the Implicitly Restarted Arnoldi method (IRA) derived in \cite{Sorensen1992ImplicitApplicationPolynomial} and improved in \cite{Lehoucq1995DeflationTechniquesImplicitly}, and the subsequent Krylov-Schur method (KS) presented in \cite{Stewart2002KrylovSchurAlgorithm}. Krylov methods are based on the Krylov subspace
$$\K_\sa(\A, v_1) \coloneqq \Span{v_1, \A v_1, \A^2 v_1, \dots, \A^{\sa-1} v_1}$$
for $v_1 \in \Rn{n}$ and are widely used in numerical linear algebra to solve linear systems of equations and eigenvalue problems; see e.g. \cite{Saad2003Iterativemethodssparse,saad2011numerical} for the two problems, respectively.

The use of randomization in numerical linear algebra has enabled many improvements in the efficiency, accuracy, or scalability of algorithms for matrix factorization, eigenvalue computation, and linear system solving, especially for large-scale datasets. Also called sketching, randomization enables the embedding of subspaces in high dimensional vector space, such as a basis for $\K(\A,v_1)$ consisting of $\sa$ vectors in the large space $\Rn{n}$, into a much smaller vector space $\Rn{\dsk}$ with $\dsk \ll n$, thereby reducing computational and communication costs. This technique has been explored for different problems in numerical linear algebra as matrix multiplication \cite{Drineas2006FastMonteCarlo}, low-rank matrix approximation \cite{Drineas2006FastMonteCarloa,HalkoFindingStructureRandomness2011}, or solving least-squares problems \cite{Rokhlinfastrandomizedalgorithm2008}. An extensive overview of randomized linear algebra can be found in e.g. \cite{Boyd2010RandomizedAlgorithmsMatrices,WoodruffComputationalAdvertisingTechniques2014, MartinssonRandomizednumericallinear2020, Murray2022RandomizedNumericalLinear}. Sketching has been applied to Krylov methods in different works as \cite{BalabanovRandomizedGramSchmidt2022,Nakatsukasa2024FastAccurateRandomized,Timsit2023RandomizedOrthogonalProjection,Guettel2023sketchselectArnoldi,Guettel2023RandomizedSketchingKrylov}, mainly to obtain well-conditioned bases of the Krylov subspace, which leads to efficient randomized variants of the GMRES algorithm for linear systems of equations. Restarting strategies in this context are developed further in \cite{Burke2023GMRESrandomizedsketching,Jang2024RandomizedflexibleGMRES}. For the nonsymmetric, non-generalized eigenvalue problem considered here, possible sketchings of the Rayleigh-Ritz method are introduced in \cite{Balabanov2021RandomizedblockGram,Nakatsukasa2024FastAccurateRandomized}, and a randomization of IRA is derived in \cite{Damas2024RandomizedImplicitlyRestarted}. In \cite{Simoncini2025StabilizedKrylovSubspace}, the authors showed that the more general concept of Krylov decomposition can encompass randomized factorizations, and they notably develop their theory in the context of truncated orthogonalization procedures. In other contexts, such as nonlinear or generalized eigenvalue problems, randomized eigensolvers have been introduced in \cite{GuettelRandomizedsketchingnonlinear2022} and \cite{Saibaba2015Randomizedalgorithmsgeneralized,Kalantzis2021FastRandomizedNon,Yin2016randomizedFEASTalgorithm}, respectively.

In this article, we present a new method for solving large scale eigenvalue problems when few eigenpairs are sought. We call it randomized Krylov-Schur (rKS), and it can be seen as an extension of the randomized Implicitly Restarted Arnoldi algorithm (rIRA) from \cite{Damas2024RandomizedImplicitlyRestarted}. In particular, we derive an equivalence between the two methods in \Cref{th:KDeqRA}. The name of the method reflects its connection to the Krylov-Schur (KS) algorithm of \cite{Stewart2002KrylovSchurAlgorithm} since rKS shares two important concepts with it, namely Krylov factorization and Schur decomposition. For rKS, we define here novel randomized factorizations called sketch-orthonormal Krylov(-Schur) decompositions that extend the randomized Arnoldi factorization, and we discuss how they can be obtained. We derive the rKS method in \cref{alg:rKS} that is based on these factorizations, together with an integration of the restart strategy developed in \cite{Stewart2002KrylovSchurAlgorithm}. Compared to other deterministic eigensolvers, we show that rKS has a significant speedup in computation time because of the use of sketch-orthogonalization processes, allowing better scalability for large-scale applications. Compared to rIRA, it enables not only an easier implementation, but also a more stable one due to the use of efficient methods to reorder the Schur factorization, rather than using the unstable implicit shifted QR method, as mentioned in \cite{Lehoucq1995DeflationTechniquesImplicitly}. Thanks to the framework of sketch-orthonormal Krylov-Schur decomposition, we define a simple deflation technique for converged eigenpairs. This makes rKS capable of computing the subspace associated with a given set of eigenvectors in an efficient manner, and we provide bounds on the quality of this approximate invariant subspace.

The paper is organized as follows. \Cref{sec:prelim} introduces notations and randomization techniques. In \cref{sec:rfactorizations}, we derive different factorizations that generalize the randomized Arnoldi framework, while still allowing for an efficient sketch Rayleigh-Ritz procedure based on Krylov subspaces. One of the advantages of the newly introduced \rKS{} factorization is that it enables a restarting procedure based on reordering Schur forms of matrices, resulting in the central \rKS{} eigensolver derived in \cref{sec:rKS}. In \cref{sec:deflation}, we prove how deflation can be incorporated with respect to sketch-orthonormal procedures. Lastly, the numerical efficiency of the method is tested in \cref{sec:numericals}.

\subsection{Preliminaries}
\label{sec:prelim}
In this work, the Euclidean vector space $\Rn{n}$ is equipped with the canonical inner product $\ip{x}{y} \coloneqq \sum_{i=1}^{n} x_i y_i = x^Ty$, where $x_i$ and $y_i$ are entries of vectors $x$ and $y$. This induces the $2$-norm on vectors, $\norm{x} \coloneqq \sqrt{\ip{x}{x}}$. Matrices are denoted with capital letters, and the notation $\norm{M}$ can refer to any sub-multiplicative norm. When precision is needed, an index such as $\fnorm{M}$ or $\enorm{M} \coloneqq \max_{x \in \Rn{n}} \frac{\norm{Ax}}{\norm{x}}$ is used to distinguish, for instance, the Frobenius norm or spectral norm. The space spanned by the columns of a matrix $M$ is an Euclidean subspace $\mathcal{M} \subset \Rn{n}$, and we denote it $\Span{M} \coloneqq \mathcal{M}$ for simplicity. The transpose of a real-valued matrix is denoted $M^T$. Orthogonality between two vectors $x$ and $y$ means that $\ip{x}{y} = 0$, and a unit (norm) vector $x$ satisfies $\norm{x} = 1$. An ($\ell_2$-)orthonormal or unitary matrix (or set of vectors) is such that $M^T M = I$. Non-singular matrices are a special case of square matrices for which the inverse $M^{-1}$ exists, and $M M^{-1} = I$. 

Central to this work are randomized embeddings for a given subspace of $\Rn{n}$. The following definition is taken from \cite{WoodruffComputationalAdvertisingTechniques2014}.
\begin{definition}[$\epsilon$-embedding]
    \label{def:epsembedd}
    Let $\K$ be a subspace of $\Rn{n}$ of dimension $\sa$. Then $\Om \in \Rnm{\dsk}{n}$ is called an $\epsilon$-embedding for $\K$ if for any $x \in \K$ it holds that
    \begin{align}
        \label{eq:epsembedd}
        (1-\epsilon) \normsq{x} \leq \normsq{\Om x} \leq (1+\epsilon) \normsq{x} \iff  \frac{1}{1+\epsilon} \normsq{\Om x} \leq \normsq{x} \leq  \frac{1}{1-\epsilon} \normsq{\Om x}
    \end{align}
    for a given $\epsilon \in [0,1]$.
\end{definition}
The dimension $\dsk$ is called the sketching size and is taken such that $\sa \leq \dsk \ll n$. An $\epsilon$-embedding preserves approximately the geometry of a subspace $\K \subset \Rn{n}$ while allowing computations in the much smaller subspace $\Rn{d}$. This definition is motivated by the Johnson-Lindenstrauss (JL) Lemma, which states that such an embedding exists for a given set of $\sa$ fixed vectors by taking $\dsk = O(\frac{\log \sa}{\epsilon^2})$; see \cite{Johnson1984ExtensionsLipschitzmappings}.
In contrast, an $\epsilon$-embedding satisfies \cref{eq:epsembedd} for a subspace $\K \subset \Rn{n}$, but to do so it requires $\dsk = O(\frac{\sa}{\epsilon^2})$. It preserves the singular values and the condition number of any basis for $\K$, as shown in \cite[Corollary 2.2]{BalabanovRandomizedGramSchmidt2022}.
\begin{corollary}
    \label{cor:embeddsvals}
    Let $\Om$ be an $\epsilon$-embedding for $\K$, and let $K$ be a matrix such that $\Span{K} = \K$. Then
    \begin{equation}
        \frac{1}{\sqrt{1+\epsilon}}\sigma_{\min}(\Om K) \leq \sigma_{\min}(K) \leq \sigma_{\max}(K) \leq \frac{1}{\sqrt{1-\epsilon}} \sigma_{\max}(\Om K)
    \end{equation}
    where $\sigma_{\min}$ and $\sigma_{\max}$ are the smallest and largest singular values, respectively.
\end{corollary}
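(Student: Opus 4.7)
The plan is to derive both outer inequalities from the Courant--Fischer / variational characterization of singular values, applied to vectors of the form $K y$ which lie in $\K$ by hypothesis, and then simply note that $\sigma_{\min}(K) \leq \sigma_{\max}(K)$ holds trivially for the middle inequality.

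First I would fix an arbitrary vector $y$ in the appropriate domain and observe that $K y \in \Span{K} = \K$. Therefore the $\epsilon$-embedding property from \cref{def:epsembedd} can be applied directly to $K y$, yielding
\begin{equation*}
    (1-\epsilon)\normsq{K y} \leq \normsq{\Om K y} \leq (1+\epsilon)\normsq{K y},
\end{equation*}
equivalently $\tfrac{1}{1+\epsilon}\normsq{\Om K y} \leq \normsq{K y} \leq \tfrac{1}{1-\epsilon}\normsq{\Om K y}$.

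For the upper bound on $\sigma_{\max}(K)$, I would use $\sigma_{\max}(K) = \max_{y \neq 0} \norm{K y}/\norm{y}$. Dividing the right-hand form $\normsq{K y} \leq \tfrac{1}{1-\epsilon}\normsq{\Om K y}$ by $\normsq{y}$, taking square roots, and maximizing over $y \neq 0$ gives $\sigma_{\max}(K) \leq \tfrac{1}{\sqrt{1-\epsilon}}\sigma_{\max}(\Om K)$. For the lower bound on $\sigma_{\min}(K)$, I would symmetrically use $\sigma_{\min}(K) = \min_{y \neq 0} \norm{K y}/\norm{y}$ (valid because $K$ has full column rank, since its columns span $\K$ of dimension $\sa$ and therefore form a basis --- this is the only nontrivial sanity check). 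From $\normsq{\Om K y} \leq (1+\epsilon)\normsq{K y}$, dividing by $\normsq{y}$, taking square roots, and minimizing gives $\sigma_{\min}(\Om K) \leq \sqrt{1+\epsilon}\,\sigma_{\min}(K)$, i.e., $\tfrac{1}{\sqrt{1+\epsilon}}\sigma_{\min}(\Om K) \leq \sigma_{\min}(K)$.

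There is no real obstacle here; the proof is essentially a one-line application of the $\epsilon$-embedding inequality to vectors in the column space of $K$, combined with the variational formulas for the extreme singular values. The only subtlety worth flagging is the implicit assumption that the columns of $K$ form a basis of $\K$ (so that $K$ has full column rank and $\sigma_{\min}(K)$ is the usual $\min_{y\neq 0}\norm{Ky}/\norm{y}$ rather than zero); if $K$ had redundant columns both $\sigma_{\min}(K)$ and $\sigma_{\min}(\Om K)$ would vanish and the bound would still hold trivially.
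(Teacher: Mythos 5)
Your proof is correct and is the standard variational argument. Note that the paper itself does not prove this statement — it cites it directly as Corollary 2.2 of \cite{BalabanovRandomizedGramSchmidt2022} — so there is no in-paper proof to compare against; your derivation (apply the $\epsilon$-embedding inequality to $Ky \in \K$ for arbitrary $y$, then take the max and min of $\norm{Ky}/\norm{y}$ and $\norm{\Om Ky}/\norm{y}$) is exactly the natural one and matches the argument used in that reference. Your flagged subtlety about $K$ having full column rank is well taken, and as you observe the bound degenerates but remains valid if both smallest singular values are zero.
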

The matrix $\Om K$ is called \emph{the sketch of $K$}. \Cref{cor:embeddsvals} is especially relevant when combined with the different randomized orthogonalization processes developed in recent years; see, for instance, Randomized Gram-Schmidt (RGS) from \cite{BalabanovRandomizedGramSchmidt2022}, randomized Householder QR (RHQR) from \cite{Grigori2024RandomizedHouseholderQR}, randomized Cholesky QR (RCholQR) in \cite{Balabanov2022RandomizedCholeskyQR}, or sketching and whitening from \cite{Rokhlinfastrandomizedalgorithm2008,Nakatsukasa2024FastAccurateRandomized}. These methods take as input a tall and skinny matrix $K \in \Rnm{n}{\sa}$ and output factors $Q \in \Rnm{n}{\sa}$ and $R \in \Rnm{\sa}{\sa}$ such that $K = QR$. While the QR decomposition produces an orthonormal factor $Q$ \cite{Golub2013Matrixcomputations}, randomized orthogonalization processes produce a \emph{sketch-orthonormal} factor $Q$, that is, the sketch of $Q$ is orthonormal, $(\Om Q)^T (\Om Q) = I$. Using \cref{cor:embeddsvals}, it holds that the condition number $\kappa(Q)$ satisfies 
\begin{equation}
    \label{eq:condsketched}
    \kappa(Q) \coloneqq \frac{\sigma_{\max}(Q)}{\sigma_{\min}(Q)} \leq \sqrt{\frac{1+\epsilon}{1-\epsilon}} \kappa(\Om Q).
\end{equation}
Hence, provided that $\Om Q$ is orthonormal, $\kappa(\Om Q) = 1$ results in $\kappa(Q) = O(1+\epsilon)$, which is relatively small and sufficient in many applications, while allowing faster computation in the smaller subspace $\Rn{\dsk}$ compared to the expensive computation of an orthonormal $Q$. Indeed, the authors in \cite{BalabanovRandomizedGramSchmidt2022} state that RGS does half of the floating point operations (flops) of Classical Gram-Schmidt (CGS), while having the stability of Modified Gram-Schmidt (MGS). RGS is also better suited for parallel implementation than CGS/MGS.

However, in many situations, the subspace $\K$ might be unknown at the beginning of the method, or we might encounter different subspaces, all of dimension $\sa$. In this case, \emph{oblivious subspace embeddings} are required.
\begin{definition}[Oblivious subspace embedding]
    The matrix $\Om \in \Rnm{d}{n}$ is said to be an oblivious subspace embedding of dimension $\sa$ if it is an $\epsilon$-embedding for every $\sa$ dimensional subspace with high probability (w.h.p.).
\end{definition}
Different oblivious subspace embeddings are presented in the literature, such as matrices with normalized Gaussian entries or sparse matrices with random signs; see \cite[Sections 8,9]{MartinssonRandomizednumericallinear2020}. They all have various advantages; for instance, Gaussian embeddings are well suited for parallel architectures, whereas sparse sign matrices are fast to apply to any vectors. They share a similar behavior, that is, being oblivious subspace embeddings w.h.p. when $\dsk = O(\frac{\sa}{\epsilon^2})$. This shows a trade-off between having a small $\epsilon$ to have good accuracy and having $\dsk$ the smallest possible to achieve major cuts in computing costs. In practice, it is advised to take $\dsk$ a small multiple of $\sa$, for example $\dsk = 2k$. This produces a distortion $\epsilon = \frac{1}{\sqrt{2}}$.

\section{Generalizing the randomized Arnoldi factorization}
\label{sec:rfactorizations}
This section introduces novel randomized factorizations for $\A$ and establishes their equivalence with the randomized Arnoldi factorization. All the factorizations considered are visually summarized in \cref{fig:tikzdecompos}. We also highlight the underlying Rayleigh-Ritz method that arises from the randomized Krylov framework.

\subsection{The randomized Krylov factorization}
Let $\A \in \Rnm{\ndim}{\ndim}$ be a square matrix. We are interested in solving the eigenvalue problem
\begin{equation}
    \label{eq:evp}
    \A \eivec = \eival \eivec,
\end{equation}
for the eigenpair $(\eival, \eivec)$, where $\eivec \in \Cn{\ndim}$ is a unit-norm eigenvector and $\eival \in \mathbb{C}$ is an eigenvalue. In many situations, only a few eigenpairs are sought, say $\sa$ of them, for example, those with largest or smallest modulus. 
Krylov subspaces can be used to approximate them. They are defined as 
\begin{equation}
    \label{eq:krylovsubspace}
    \K_\sa(\A, v_1) \coloneqq \Span{v_1, \A v_1, \dots, \A^{\sa -1} v_1},
\end{equation}
given a starting unit vector $v_1$ and a target dimension $\sa$. We denote $\K_\sa(\A, v_1) = \K_\sa$ for brevity. In this article, we assume that the dimension of any Krylov subspace $\K_\sa$ is exactly $\sa$ and not less, so that we can work with bases of full rank. If this is not the case, it means that an invariant subspace for $\A$ has been found and exact eigenpairs can be extracted, so the eigenproblem is solved. In practice, the starting unit vector $v_1$ is taken at random and it is unlikely that an invariant subspace will be found. Constructing an orthonormal or sketch-orthonormal basis for $\K$, as considered in \cite{saad2011numerical} and \cite{Nakatsukasa2024FastAccurateRandomized,Balabanov2021RandomizedblockGram, Damas2024RandomizedImplicitlyRestarted}, respectively, is useful in the setting of a (randomized) Rayleigh-Ritz procedure that extracts $\sa$ approximate eigenpairs from $\A$. 

We start by defining a randomized Arnoldi factorization of size $\sa$ for $\A$, which naturally arises when constructing a sketch-orthonormal basis for the ill-conditioned basis $\{v_1, \A v_1, \dots, \A^{\sa -1} v_1\}$.
\begin{definition}[Randomized Arnoldi factorization of size $\sa$]
    \label{def:rArno}
    Assume that $\Om$ is an $\epsilon$-embedding for $\K_{\sa+1}$. A randomized Arnoldi factorization of size $\sa$ for $\A$ is the decomposition
    \begin{equation}
    \label{eq:rArno}
        \A V = V H + \beta v_{\sa+1} e_{\sa}^T
    \end{equation}
    where 
    \begin{itemize}
        \item $H \in \Rnm{\sa}{\sa}$ is upper-Hessenberg, i.e.\@, upper-triangular with possible sub-diagonal entries. It is said to be unreduced if all sub-diagonal entries are nonzero.
        \item $(V \; v_{\sa+1})$ is a sketch-orthonormal set of $\sa+1$ vectors in $\Rn{\ndim}$, that is, $(\Om V \; \Om v_{\sa+1})^T (\Om V \; \Om v_{\sa+1}) = I_{\sa+1}$. Moreover, $(V \; v_{\sa+1})$ is a basis for $\K_{\sa+1}$.
    \end{itemize} 
\end{definition}
\begin{remark}
    Throughout this section, $\Om$ is assumed to be an $\epsilon$-embedding for $\K_{\sa+1}$. Our aim in what follows is to show that the span of $(V \; v_{\sa+1})$, which is $\K_{\sa+1}$, is preserved through some transformations applied to obtain different factorizations. Hence, having an $\epsilon$-embedding for $\K_{\sa+1}$ is sufficient for all factorizations. 
\end{remark}
Note that the deterministic Arnoldi factorization is written the same way, with $(V \; v_{\sa+1})$ being $\ell_2$-orthonormal. In the early 2000s, G. W. Stewart introduced the Krylov decomposition to simplify restart procedures and deflation techniques for methods such as the deterministic Implicitly Restarted Arnoldi from \cite{Sorensen1992ImplicitApplicationPolynomial,Lehoucq1995DeflationTechniquesImplicitly}. We recall the definition presented in \cite{Stewart2002KrylovSchurAlgorithm}:
\begin{definition}[Krylov decomposition of order $\sa$]
    \label{def:KD}
    A Krylov decomposition of order $\sa$ for $\A$ is the decomposition
    \begin{equation}
    \label{eq:KD}
        \A \U = \U \Bsa + \ulast \blast\tsp
    \end{equation}
    where $\Bsa \in \Rnm{\sa}{\sa}$ must be of rank $\sa$, and $(\U \; \ulast)$ is an independent set of $\sa+1$ vectors in $\Rn{\ndim}$. The span of $(\U \; \ulast)$ is called the space of the decomposition.
\end{definition}
It is a generalization of the (randomized) Arnoldi decomposition in the sense that a (randomized) Arnoldi decomposition is a Krylov decomposition. \Cref{def:KD} removes any structural hypotheses on $\Bsa$ and $\blast$, allowing for a more flexible framework. 
In this work, we consider Krylov decompositions where the Krylov basis $\U$ is sketch-orthonormal.
\begin{definition}[Sketch-orthonormal Krylov decomposition of order $\sa$]
    \label{def:sketchorthKD}
    A sketch-orthonormal Krylov decomposition of order $\sa$ for $\A$ is the Krylov decomposition
    \begin{equation}
    \label{eq:sketchorthKD}
        \A \U = \U \Bsa + \ulast \blast\tsp,
    \end{equation}
    where $(\U \; \ulast)$ is sketch-orthonormal.
\end{definition}
In the following, we justify the name sketch-orthonormal Krylov decomposition by showing that it is equivalent to a randomized Arnoldi factorization, and discuss its usefulness for obtaining spectral information on $\A$ within a sketch Rayleigh-Ritz procedure.

\subsection{Sketch-orthonormal Krylov subspaces within the Rayleigh-Ritz method}
\label{sec:KSasRR}
The \rArno{} factorization in \cref{eq:rArno} forms the foundation of a randomized Rayleigh-Ritz procedure (see, e.g., \cite{Nakatsukasa2024FastAccurateRandomized,Balabanov2021RandomizedblockGram}), with $\Hsa$ representing $\A$ with respect to the sketch-orthonormal Krylov basis. Here, we address how $\Bsa$ from the more general sketch-orthonormal Krylov decomposition \cref{eq:sketchorthKD} can be interpreted as a Rayleigh quotient. Using definitions from \cite{Stewart2002AddendumKrylovSchur}, recall that the Rayleigh-Ritz method for an operator $\A$ is based on two test subspaces $\K$ and $\Lspace$. If $V$ and $W$ are matrix representations of bases for $\K$ and $\Lspace$, respectively, then the Rayleigh quotient $B$ with respect to $V$ and $W$ is defined as
\begin{equation}
    B \coloneqq (W^TV)^\dag W^T A V.
\end{equation} 
The only requirements are that $\dim(\Lspace) \geq \dim(\K)$ and that $W^TV$ is non-singular, so $(W^TV)^\dag$ is a left-inverse for $(W^TV)$. If $(\eival,V \yrivec)$ is an eigenpair for $\A$, then
\begin{equation}
    B \yrivec = (W^TV)^\dag W^T \A V \yrivec = \eival (W^TV)^\dag W^T  V \yrivec = \eival \yrivec,
\end{equation}
so $(\eival, \yrivec)$ is an eigenpair for $B$. Conversely, if $(\eival, \yrivec)$ is an eigenpair for $B$, then $B \yrivec = (W^TV)^\dag W^T \A V \yrivec = \eival \yrivec$, so
\begin{equation}
    \label{eq:ripairs}
    V (W^TV)^\dag W^T \A V \yrivec = P_{\K}^{\Lspace} \A \eivec = \eival \eivec,
\end{equation}
where $\eivec \coloneqq V \yrivec$. Thus, $(\eival, V \yrivec)$ is an eigenpair for the approximate operator $P_{\K}^{\Lspace} A$, where $P_{\K}^{\Lspace}$ is an oblique projector onto $\K$. Equivalently, $P_{\K}^{\Lspace}(\A \eivec - \eival \eivec) = 0$, which is often called the Petrov-Galerkin condition.

In the setting of a sketch-orthonormal Krylov decomposition, we have
\begin{align}
    \label{eq:Bsa_is_RR}
    \A \U = \U \Bsa + \ulast \blast\tsp \implies (\Om \U)^\dag \Om \A \U = \Bsa 
\end{align}
by left-multiplying with $(\Om \U)^\dag \Om$ and using the sketch-orthonormality of $(\U \; \ulast)$. Thus, $\Bsa$ can be identified as a Rayleigh quotient where $\K$ is the Krylov subspace $\Span{\U}$, and $\Lspace$ is $\Span{\Om^T}$, with $V = \U$ and $W = \Om^T$. The following theorem summarizes the situation.
\begin{theorem}
    Assume a sketch-orthonormal Krylov decomposition, i.e.\@, $\A \U = \U \Bsa + \ulast \blast\tsp$ with $(\U \; \ulast )$ sketch-orthonormal.
    Then $\Bsa$ is a Rayleigh quotient with respect to $\U$ and $\Om^T$, with
    \begin{equation}
        \label{eq:rayleighquo-rKD}
        \Bsa = (\Om \U)^\dag \Om \A \U.
    \end{equation}
    In this situation, the underlying oblique projector $P_{\K}^{\Lspace}$ on $\K$ is 
    \begin{equation}
        P_{\K}^{\Lspace} = \U (\Om \U)^\dag \Om
    \end{equation}
    and any eigenpair $(\eival, \yrivec)$ of $\Bsa$ satisfies a sketched Petrov-Galerkin condition, 
    \begin{equation}
        \label{eq:rPetrovGal}
        \ip{\Om (\A \U \yrivec - \eival \U \yrivec)}{\Om v} = 0, \; \forall v \in \K.
    \end{equation}
\end{theorem}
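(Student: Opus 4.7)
The plan is to dispatch the three assertions in turn, each reducing to elementary manipulations once the key algebraic fact $(\Om \U)^\psinv = (\Om \U)^T$ is in hand. I would begin by recording two consequences of the sketch-orthonormality of $(\U \ \ulast)$: the tall matrix $\Om \U \in \Rnm{\dsk}{\sa}$ has orthonormal columns, so its Moore-Penrose pseudoinverse equals its transpose and in particular $(\Om \U)^\psinv (\Om \U) = I_\sa$; moreover, $(\Om \U)^T (\Om \ulast) = 0$. These two identities do essentially all the work below.

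For the first assertion, I would left-multiply $\A \U = \U \Bsa + \ulast \blast\tsp$ by $(\Om \U)^\psinv \Om$: the first term on the right collapses to $\Bsa$, the rank-one perturbation vanishes, and one is left with $\Bsa = (\Om \U)^\psinv \Om \A \U$ (this is the identity already noted in \cref{eq:Bsa_is_RR}). Setting $V = \U$ and $W = \Om^T$ in the Rayleigh-Ritz framework recalled earlier, one has $W^T V = \Om \U$ and $W^T \A V = \Om \A \U$, so the identification with the Rayleigh quotient $(W^T V)^\psinv W^T \A V$ is immediate, and the nonsingularity requirement on $W^T V$ is satisfied because $(\Om \U)^T (\Om \U) = I_\sa$.

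The formula $P_{\K}^{\Lspace} = V(W^T V)^\psinv W^T = \U (\Om \U)^\psinv \Om$ for the oblique projector then follows directly from the framework. I would include a one-line check that it is idempotent, using $(\Om \U)^\psinv (\Om \U) = I_\sa$, and that its range is exactly $\K = \Span{\U}$, since $P$ fixes every vector of the form $\U z$ while its outputs already lie in $\Span{\U}$.

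Finally, to obtain the sketched Petrov-Galerkin condition, I would start from an eigenrelation $\Bsa \yrivec = \eival \yrivec$, substitute the expression of $\Bsa$ derived above, and use $(\Om \U)^T \Om \U = I_\sa$ to rewrite the identity as $(\Om \U)^T \Om(\A \U \yrivec - \eival \U \yrivec) = 0$. For an arbitrary $v \in \K$ I would write $v = \U z$ and compute $\ip{\Om v}{\Om(\A \U \yrivec - \eival \U \yrivec)} = z^T (\Om \U)^T \Om(\A \U \yrivec - \eival \U \yrivec) = 0$, which closes the proof. There is no genuine obstacle here: the statement is a careful bookkeeping exercise whose sole non-trivial ingredient is the orthonormality of the columns of $\Om \U$, which is precisely what sketch-orthonormality guarantees.
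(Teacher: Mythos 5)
Your proposal is correct and follows essentially the same route as the paper's proof: left-multiplying the decomposition by $(\Om \U)^\psinv \Om$ to expose $\Bsa$ as the Rayleigh quotient, plugging $V=\U$, $W=\Om^T$ into the framework formula for $P_\K^\Lspace$, and then using $(\Om\U)^T\Om\U = I_\sa$ to insert the identity and factor out $\Om$ to obtain $(\Om\U)^T\Om(\A\U\yrivec - \eival\U\yrivec)=0$. The only cosmetic difference is that you start the last part directly from $\Bsa\yrivec=\eival\yrivec$ rather than from the projector relation $P_\K^\Lspace \A\eivec = \eival\eivec$ of eq.~\eqref{eq:ripairs} as the paper does (which then requires factoring out $\U$ and invoking linear independence of its columns); your route is slightly more direct and equally valid.
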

\begin{proof}
    The interpretation of $\Bsa$ as a Rayleigh quotient is shown above in \cref{eq:Bsa_is_RR}. Note that $\dim(\Lspace) = \dim(\Span{\Om^T}) = \rank(\Om) \geq \rank(\Om \U)$, and by the $\epsilon$-embedding property of $\Om$ for $\U$, we have $\rank(\Om \U) = \sa$. Thus, $\dim(\Lspace) \geq \dim(\K)$. In addition, $W^TV = \Om \U$ is non-singular since $\U$ is full-rank and $\Om$ is an $\epsilon$-embedding for $\Span{\U}$. The expression for $P_{\K}^{\Lspace}$ follows directly for $V = \U$ and $W = \Om^T$:
    $$P_{\K}^{\Lspace} =  V (W^TV)^\dag W^T = \U (\Om \U)^\dag \Om. $$
    Moreover, $(\Om \U)^\dag = (\Om \U)^T$ since $\Om \U$ is orthonormal. If $(\eival, \yrivec)$ is an eigenpair for $\Bsa$, \cref{eq:ripairs} gives
    \begin{align*}
        P_{\K}^{\Lspace} \A \U  \yrivec & = \eival \U  \yrivec \\
        \U (\Om \U)^T \Om \A \U  \yrivec - \eival \U  \yrivec & = 0 & \text{expressing $P_{\K}^{\Lspace}$} \\
        \U [( \Om \U)^T \Om \A \U  \yrivec - \eival  \yrivec] & = 0 &  \text{factorizing $\U$} \\
        (\Om \U)^T \Om \A \U  \yrivec - \eival  \yrivec & = 0 &  \text{since columns of $\U$ are independent}  \\
        (\Om \U)^T (\Om \A \U  \yrivec - \eival  \Om \U \yrivec) & = 0  &  \text{factorizing $(\Om \U)^T$ and writing $\yrivec = I_\sa \yrivec = (\Om \U)^T \Om \U \yrivec$ } \\
        (\Om \U)^T \Om (\A \U  \yrivec - \eival \U \yrivec) & = 0  &  \text{factorizing $\Om$} \\
    \end{align*}
    which shows that the sketch of $\A \U  \yrivec - \eival \U \yrivec$ is orthogonal to the sketch of $\U$, i.e.\@, $\Om (\A \U \yrivec - \eival \U \yrivec) \perp \Om \K$.
\end{proof}

The Rayleigh-Ritz method is central to solving the eigenvalue problem for a few eigenpairs in numerical linear algebra. It reduces the size of the problem by solving a smaller eigenvalue problem for $\Bsa$. The hope is that the approximate operator $P_\K^\Lspace \A$, for which exact eigenpairs are computed, is close to $\A$ when restricted to the $\sa$-dimensional eigenspace of interest. This heavily depends on the subspace $\K$, and Krylov subspaces and the Arnoldi method have been highly successful, as reviewed in \cite{saad2011numerical}. In the deterministic framework, one works with an orthogonal basis $V_\sa$ for $\K$, resulting in an orthogonal projector $P_\K^\K = V_\sa V_\sa^T$. Using a randomized approach modifies the projection by making it oblique and transferring the $\ell_2$-orthogonality into the sketched subspaces, as seen in \cref{eq:rPetrovGal}, but it remains a projection into the well-studied Krylov subspaces, which are rich in spectral information about $\A$. Thus, good performance can be expected from randomized eigensolvers using Krylov subspaces, as developed in this article.

\subsection{An equivalence between Krylov decompositions and randomized Arnoldi decompositions}

We state a central result that links any Krylov decomposition to a randomized Arnoldi decomposition, further reinforcing the idea that the randomized Krylov decompositions framework is relevant for the eigenvalue problem.
\begin{theorem}
    \label{th:KDeqRA}
    A Krylov decomposition as defined in \Cref{def:KD} is equivalent to a randomized Arnoldi decomposition in the sense that both decompositions span the same space, for which $\Om$ is an $\epsilon$-embedding. That is, if \cref{eq:KD} holds:
    $$\A \U = \U \Bsa + \ulast \blast\tsp,$$
    then there exists a sketch-orthonormal $V$, an upper-Hessenberg $H$, a vector $v_{\sa+1}$ and a scalar $\beta$ such that 
    $$\A V = V H + \beta v_{\sa+1} e_{\sa+1}^T$$ 
    with $\Span{\U \; \ulast} = \Span{V \; v_{\sa+1}}$. If $H$ is unreduced, the randomized Arnoldi factorization is essentially unique, that is, up to the signs of the sub-diagonal elements of $H$ and vectors $v_i$.
\end{theorem}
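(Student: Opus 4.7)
The plan is to reduce the given Krylov decomposition to randomized Arnoldi form by a deterministic orthogonal change of basis, and then to establish uniqueness by an implicit-$Q$-type argument.

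First, I would observe that any deterministic orthogonal $Q \in \Rnm{\sa}{\sa}$ preserves sketch-orthonormality: setting $V \coloneqq \U Q$, we have $(\Om V)^T (\Om V) = Q^T (\Om \U)^T (\Om \U) Q = Q^T Q = I_\sa$, while $\Om \ulast$ remains orthogonal to every column of $\Om V$ and has unit norm, so $(V\;\ulast)$ stays sketch-orthonormal. The span is preserved as well, $\Span{\U Q\;\ulast} = \Span{\U\;\ulast}$, and right-multiplying \cref{eq:sketchorthKD} by $Q$ gives $\A V = V (Q^T \Bsa Q) + \ulast (\blast^T Q)$. The task thus reduces to finding an orthogonal $Q$ with $Q^T \Bsa Q$ upper-Hessenberg and $\blast^T Q$ a scalar multiple of $e_\sa^T$.

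Second, I would construct such a $Q$ by applying the standard deterministic Arnoldi process to $\Bsa^T$ with starting unit vector $\blast/\enorm{\blast}$. This produces an orthogonal $\tilde Q \in \Rnm{\sa}{\sa}$ with $\tilde Q e_1 = \blast/\enorm{\blast}$ and $\tilde Q^T \Bsa^T \tilde Q$ upper-Hessenberg (if Arnoldi breaks down before step $\sa$, complete the basis arbitrarily in the orthogonal complement, accepting a reduced $H$). Letting $J$ denote the $\sa\times\sa$ reversal matrix and $Q \coloneqq \tilde Q J$, a direct calculation gives $Q^T \Bsa Q = J(\tilde Q^T \Bsa^T \tilde Q)^T J$, which is the reversal of a lower-Hessenberg matrix and hence upper-Hessenberg, while $\blast^T Q = \enorm{\blast}\, e_1^T J = \enorm{\blast}\, e_\sa^T$. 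Setting $H \coloneqq Q^T \Bsa Q$, $\beta \coloneqq \enorm{\blast}$, and $v_{\sa+1} \coloneqq \ulast$ then yields the desired randomized Arnoldi factorization. When $H$ is unreduced, a short induction on the Arnoldi relation $\A V = V H + \beta v_{\sa+1} e_\sa^T$ shows that $\A^j(V e_1) \in \Span{v_1,\dots,v_{j+1}}$ with coefficient of $v_{j+1}$ equal to $h_{21} h_{32} \cdots h_{j+1,j} \neq 0$, so $\Span{V\;v_{\sa+1}}$ coincides with $\K_{\sa+1}(\A, V e_1)$, in agreement with \Cref{def:rArno}.

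Finally, for essential uniqueness when $H$ is unreduced, I would invoke an implicit-$Q$-type argument keyed on the fixed column $Q e_\sa = \blast/\enorm{\blast}$. Any alternative orthogonal $Q'$ satisfying $\blast^T Q' \propto e_\sa^T$ and $Q'^T \Bsa Q'$ upper-Hessenberg and unreduced must have $Q' e_\sa = \pm \blast/\enorm{\blast}$; applying the classical implicit-$Q$ theorem to the reversed matrix $Q' J$ acting on $\Bsa^T$ (whose first column is then fixed up to sign), every remaining column of $Q'$ must equal, up to sign, the corresponding Gram-Schmidt residual in Arnoldi on $(\Bsa^T, \blast)$. These sign choices correspond exactly to the sign ambiguity in the subdiagonal entries of $H$ and the columns $v_i$. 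The main obstacle will be making the reverse-ordered implicit-$Q$ argument precise and tracking how the sign flips propagate consistently between the columns of $V$, the subdiagonals of $H$, and the scalar $\beta$.
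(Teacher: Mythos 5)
Your proof starts from the wrong premise. The theorem hypothesizes a \emph{general} Krylov decomposition as in \Cref{def:KD}, where $(\U \; \ulast)$ is merely an independent set of vectors with no sketch-orthonormality assumed. Your opening computation $(\Om V)^T(\Om V) = Q^T (\Om \U)^T (\Om \U) Q = Q^T Q = I_\sa$ silently uses $(\Om\U)^T(\Om\U) = I_\sa$, and you explicitly right-multiply \cref{eq:sketchorthKD}, not \cref{eq:KD}. An orthogonal change of basis alone cannot repair a non-sketch-orthonormal $\U$, nor can it make $\ulast$ sketch-orthogonal to $\U$ with unit sketch-norm. To bridge this, you need two preliminary reductions that the paper isolates in \cref{lem:rorthoU} and \cref{lem:rorthou}: a \emph{non-singular} (not merely orthogonal) similarity $R^{-1}$ coming from a randomized QR of $\U$ to sketch-orthonormalize the basis, followed by a \emph{translation} $\alpha \tulast = \ulast - \U g$ with $g = (\Om\U)^\dag \Om\ulast$ to make the last vector sketch-orthogonal with unit sketch-norm. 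Both operations preserve the span $\Span{\U\;\ulast}$, and without them the theorem as stated is not addressed.

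Once that gap is patched, the rest of your argument is sound and gives a cleaner finish than the paper's. The paper reduces $\blast$ to $\beta e_\sa$ by one Householder reflector and then brings $\Bsa$ to Hessenberg form by further Householder transforms chosen not to disturb the last row of the tail term; your construction achieves both simultaneously by running deterministic Arnoldi on $\Bsa^T$ from $\blast/\enorm{\blast}$ and reversing with $J$, a compact and correct alternative. Your uniqueness sketch via an implicit-$Q$ argument keyed on the fixed column $Qe_\sa = \blast/\enorm{\blast}$ is in the right spirit; the paper simply cites the Randomized Implicit Q theorem from \cite{Damas2024RandomizedImplicitlyRestarted} at this point, so your plan amounts to re-deriving that result in the reversed ordering. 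You should also note explicitly, as the paper implicitly assumes, that $\blast \neq 0$ (otherwise the Krylov subspace is invariant and the span has dimension $\sa$, not $\sa+1$).
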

As pointed out in the original work of \cite{Stewart2002KrylovSchurAlgorithm} which is set in the deterministic setting, this result legitimates the name "Krylov decomposition" since $\Span{\U \; \ulast} = \Span{V \; v_{\sa+1}} = \K_{\sa+1}$. The proof, delayed in the following, is based on similarity transformations and translations that can be applied to \cref{eq:KD} while maintaining the original span of $(\U \; \ulast)$. These operations have been introduced by the author of \cite{Stewart2002KrylovSchurAlgorithm} and are:
\begin{itemize}
    \item For similarity: if $R \in \Rnm{\sa}{\sa}$ is non-singular, then right multiply \cref{eq:KD} by $R^{-1}$ to obtain 
    \begin{align*}
        \A \U R^{-1} & = \U \Bsa R^{-1} + \ulast \blast\tsp R^{-1} \\
                     & = \U R^{-1} R \Bsa R^{-1} + \ulast \blast\tsp R^{-1} \\
        \implies  \A \tU &  = \tU \tBsa + \ulast \blast\tsp
    \end{align*}
    with $ \tU \coloneqq \U R^{-1}$, $\tBsa \coloneqq R \Bsa R^{-1}$ and $\tblast\tsp \coloneqq \blast\tsp R^{-1}$. The last equation is a Krylov decomposition, and we have $\Span{\tU} = \Span{\U}$, thus $\Span{\tU \; \ulast} = \Span{\U \; \ulast} $. Moreover $\tBsa$ and $\Bsa$ are similar so they contain the same spectral information.
    \item For translation: if $\alpha \tulast \coloneqq \ulast - \U g$ for any $g \in \Rn{\sa}$ and any scalar $\alpha$, then by substitution in \cref{eq:KD}
    \begin{align*}
         \A \U & = \U \Bsa + (\alpha \tulast + \U g) \blast\tsp \\
            & = \U (\Bsa + g \blast\tsp) + \alpha \tulast \blast\tsp \\
            & = \U \tBsa + \tulast \tblast\tsp
    \end{align*}
    with $\tBsa \coloneqq \Bsa + g \blast\tsp$ and $\tblast \coloneqq \alpha \blast$. The last equation is also a Krylov decomposition and since $\tulast \in \Span{\U \; \ulast}$, it holds that $\Span{\U \; \tulast} = \Span{\U \; \ulast}$.
\end{itemize}
To prove \Cref{th:KDeqRA}, we first show two lemmas based on these transformations.
\begin{lemma}
    \label{lem:rorthoU}
    Assume that \cref{eq:KD} holds, that is $\A \U = \U \Bsa + \ulast \blast\tsp$. Then an equivalent Krylov decomposition, where the factor $\tU$ is sketch-orthonormal, can be obtained by means of a similarity transformation.
\end{lemma}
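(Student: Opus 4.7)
The plan is to construct the similarity matrix $R$ from a QR factorization of the sketched basis $\Om \U$ and verify that the resulting basis is sketch-orthonormal. Specifically, I would proceed as follows.

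First, I would justify that $\Om \U$ has full column rank $\sa$. By \Cref{def:KD}, the columns of $(\U \; \ulast)$ are linearly independent, so in particular $\U$ is full column rank. Since $\Om$ is assumed to be an $\epsilon$-embedding for $\K_{\sa+1} \supseteq \Span{\U}$, \Cref{cor:embeddsvals} applied with $K = \U$ gives $\sigma_{\min}(\U) \leq \frac{1}{\sqrt{1-\eps}} \sigma_{\min}(\Om\U)/\sqrt{1+\eps}$ in the form that forces $\sigma_{\min}(\Om \U) > 0$, so $\Om \U \in \Rnm{\dsk}{\sa}$ has rank $\sa$.

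Next, I would compute a thin QR factorization $\Om \U = Q R$ with $Q \in \Rnm{\dsk}{\sa}$ orthonormal and $R \in \Rnm{\sa}{\sa}$ upper triangular. The previous step ensures $R$ is non-singular. Setting $\tU \coloneqq \U R^{-1}$, one has $\Om \tU = \Om \U R^{-1} = Q$, which is orthonormal; hence $\tU$ is sketch-orthonormal by definition.

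Finally, I would invoke the similarity transformation recalled just above \Cref{lem:rorthoU}: right-multiplying \cref{eq:KD} by $R^{-1}$ produces
$$\A \tU = \tU \tBsa + \ulast \tblast\tsp,$$
with $\tBsa \coloneqq R \Bsa R^{-1}$ and $\tblast\tsp \coloneqq \blast\tsp R^{-1}$. Since $R$ is non-singular, $\tBsa$ retains rank $\sa$, and $\Span{\tU \; \ulast} = \Span{\U \; \ulast}$, so the transformed factorization is indeed a Krylov decomposition of order $\sa$ in the sense of \Cref{def:KD}, equivalent to the original one.

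The argument is essentially a one-line construction once invertibility of $R$ is established; the only point requiring care is that sketch-orthonormality is asked only for $\tU$, not for the augmented set $(\tU \; \ulast)$, so no further translation of $\ulast$ is needed here. This is presumably the role of the next lemma, which can combine the present similarity with a translation step to orthogonalize $\ulast$ against the sketch of $\tU$ and normalize it. Hence I do not expect any substantive obstacle in the proof of \Cref{lem:rorthoU} itself.
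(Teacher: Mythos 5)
Your proposal is correct and follows essentially the same approach as the paper: obtain a non-singular upper-triangular $R$ so that $\tU = \U R^{-1}$ is sketch-orthonormal, then apply $R^{-1}$ as the similarity transformation recalled before the lemma. The paper states this as ``perform any randomized QR factorization on $\U$''; you instantiate it concretely with the sketch-and-whiten variant (QR of $\Om\U$), which is one such randomized QR, so the constructions coincide in exact arithmetic. One small correction: \cref{cor:embeddsvals} as stated gives $\frac{1}{\sqrt{1+\epsilon}}\sigma_{\min}(\Om\U) \leq \sigma_{\min}(\U)$, which bounds $\sigma_{\min}(\U)$ by $\sigma_{\min}(\Om\U)$, not the reverse; to conclude $\sigma_{\min}(\Om\U)>0$ you want the lower half of \cref{eq:epsembedd} directly, which gives $\sigma_{\min}(\Om\U) \geq \sqrt{1-\epsilon}\,\sigma_{\min}(\U) > 0$. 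This is a minor misattribution and does not affect the argument, which is sound.
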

\begin{proof}
    We start with $\A \U = \U \Bsa + \ulast \blast\tsp$. Then we can perform any randomized QR factorization on $\U$ to obtain 
    $$\U = \tU R,$$
    with $\tU \in \Rnm{\ndim}{\sa}$ being sketch-orthonormal, that is,$(\Om \tU)^T (\Om \tU)  = I_\sa$. Thus $\U R^{-1} = \tU$, so using $R^{-1}$ as a similarity transform, which is non-singular if $\U$ is full rank, gives the stated result.
\end{proof}
\begin{lemma}
    \label{lem:rorthou}
    Assume that \cref{eq:KD} holds, that is $\A \U = \U \Bsa + \ulast \blast\tsp$. Assume also that $\U$ is sketch-orthonormal. Then an equivalent Krylov decomposition, where the factor $\tulast$ is sketch-orthogonal to $\U$, can be obtained by means of a translation. Moreover, the sketch-norm of $\tulast$ can be set to $1$.
\end{lemma}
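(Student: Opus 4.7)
The plan is to explicitly construct the translation vector $g \in \Rn{\sa}$ that orthogonalizes (in the sketched sense) the new last vector against $\U$, and then to rescale. Write the translation from the preamble as $\alpha \tulast = \ulast - \U g$ and demand $(\Om \U)^T \Om \tulast = 0$. Substituting gives
\begin{equation*}
    (\Om \U)^T \Om \ulast = (\Om \U)^T (\Om \U) g = I_{\sa} g = g,
\end{equation*}
where I use the sketch-orthonormality $(\Om \U)^T (\Om \U) = I_{\sa}$. So the canonical choice is $g \coloneqq (\Om \U)^T \Om \ulast$, which is exactly the sketched projection coefficient vector. With this $g$, plugging into the translation formula from the preamble yields a new Krylov decomposition $\A \U = \U \tBsa + \tulast \tblast\tsp$ with $\tBsa = \Bsa + g \blast\tsp$ and $\tblast\tsp = \alpha^{-1} \blast\tsp$ only after we fix $\alpha$.

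For the normalization, set $w \coloneqq \ulast - \U g$, let $\alpha \coloneqq \enorm{\Om w}$, and take $\tulast \coloneqq w / \alpha$. By construction $\enorm{\Om \tulast} = 1$, and the sketch-orthogonality computed above is homogeneous in $\alpha$, so it is preserved. What remains is to check that the span is unchanged, which I get for free from the general translation argument already given in the excerpt, since $\tulast \in \Span{\U \; \ulast}$ and the transformation is invertible.

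The one step that needs a genuine argument, and is the only real obstacle, is verifying $\alpha \neq 0$ so that the normalization is legitimate. If $\alpha = 0$ then $\Om w = 0$, so by the $\epsilon$-embedding property of $\Om$ on $\K_{\sa+1} = \Span{\U \; \ulast}$ (applied to $w$, which lies in this span), $(1-\epsilon) \normsq{w} \leq \normsq{\Om w} = 0$ forces $w = 0$, i.e. $\ulast = \U g \in \Span{\U}$. This contradicts the linear independence of $(\U \; \ulast)$ that is built into \Cref{def:KD}, so $\alpha > 0$ and the rescaling is well defined. I would close by remarking that the resulting triple $(\U, \tulast, \tBsa, \tblast)$ is a sketch-orthonormal Krylov decomposition in the sense of \Cref{def:sketchorthKD}, completing the proof.
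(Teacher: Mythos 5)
Your proof is correct and follows essentially the same route as the paper's: choosing $g$ so that $\tulast$ lies in the kernel of $(\Om\U)^T\Om$, then normalizing by the sketch-norm. The one addition you make — verifying $\alpha \neq 0$ via the $\epsilon$-embedding property and the independence of $(\U \; \ulast)$ — is a detail the paper leaves implicit, and it is a worthwhile one.
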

\begin{proof}
    We know that we can apply a translation of the form $\alpha \tulast \coloneqq \ulast - \U g$ to \cref{eq:KD}. In order to make $\tulast$ sketch-orthogonal to $\U$, we set
    $$g \coloneqq (\Om \U)^\dag (\Om \ulast) = \arg \min_{y} \norm{\Om(\U y - \ulast)}.$$
    This gives $\alpha \tulast  = (I - \U (\Om \U)^\dag \Om) \ulast = (I-\POm) \ulast$ with $\POm \coloneqq \U (\Om \U)^\dag \Om$ the matrix representation of an oblique projector on $\Span{\U} = \K_\sa$. In other words, we retrieve from $\ulast$ its oblique projection on $\K_\sa$.
    By doing so, it holds that
    \begin{align*}
        (\Om \U)^T (\Om \tulast) & = \frac{1}{\alpha} (\Om \U)^T [\Om (\ulast - \U (\Om \U)^T \Om \ulast)] \\
        & =  \frac{1}{\alpha} (\Om \U)^T [\Om \ulast - \Om \U (\Om \U)^T \Om \ulast] \\
        & = \frac{1}{\alpha}  [(\Om \U)^T \Om \ulast - (\Om \U)^T  (\Om \U) (\Om \U)^T \Om \ulast]\\
        & =  \frac{1}{\alpha}  [(\Om \U)^T \Om \ulast - (\Om \U)^T \Om \ulast] = 0.
    \end{align*}
    Lastly, the scalar $\alpha$ can be set to make the sketch-norm of $\tulast$  equal to $1$ with
    \begin{equation*}
        \alpha \coloneqq \norm{\Om(\ulast - \U g)} \implies \norm{\Om \tulast} = 1.
    \end{equation*}
\end{proof}

Combining \Cref{lem:rorthoU} and \Cref{lem:rorthou} allows us to obtain a sketch-orthonormal Krylov decomposition, which we introduced above in \cref{def:sketchorthKD}. 

We are now ready to derive the proof of \cref{th:KDeqRA} by using the above transformations and lemmas.
\begin{proof}[Proof of \cref{th:KDeqRA}]
    \label{proof:KDeqRA}
    We start from the Krylov decomposition $\A \U = \U \Bsa + \ulast \blast\tsp$. Following the idea of the proof in \cite[Theorem 2.2]{Stewart2002KrylovSchurAlgorithm}, we apply the following transformations in sequence to \cref{eq:KD}:
    \begin{enumerate}
        \item Use a similarity transformation to make $\U$ sketch-orthonormal using \cref{lem:rorthoU}.
        \item Use a translation to make $\ulast$ sketch-orthogonal to $\U$ and with a sketch-norm of $1$ using \cref{lem:rorthou}.
        \item Reduce $\blast$ to $\beta e_\sa$ using a non-singular unitary similarity transformation, namely a Householder reflector as derived in \cite[Chapter 5]{Golub2013Matrixcomputations}. 
        \item Lastly, reduce $\Bsa$ to Hessenberg form. As stated in \cite{Stewart2002KrylovSchurAlgorithm}, this can be done by unitary similarity transformations, namely Householder reflectors again, without adding non-zeros entries to $\beta e_\sa$.
    \end{enumerate}
    Eventually, a randomized Arnoldi factorization of the form $\A V = V H + \beta v_{\sa+1} e_{\sa+1}^T$ is obtained, and if $H$ is unreduced, then the factorization is essentially unique for the given $\Om$ up to signs of $h_{i+1,i}$ and $v_i$  due to a property of the randomized Arnoldi decomposition, see \cite[Randomized Implicit Q theorem]{Damas2024RandomizedImplicitlyRestarted}. 
\end{proof}

\subsection{The \rKS{} decomposition}

To conclude this section, we introduce the \rKS{} decomposition that is essential for the \rKS{} algorithm. The main idea of (r)KS is to enhance the restart procedure of (r)IRA, which is based on applying several shifted QR steps to the Hessenberg matrix. This strategy can be unstable because of the shifts and it does not entail a simple deflation procedure. However, it can be performed equivalently by moving around the eigenvalues of the Schur form of $\Bsa$ in a  stable way using only unitary transformations. We recall the real Schur form of a matrix  from \cite{Golub2013Matrixcomputations}:
\begin{proposition}[Real Schur form]
    \label{prop:realSchur}
    If $M \in \Rnm{\ndim}{\ndim}$, then there exists an orthonormal $Q  \in \Rnm{\ndim}{\ndim}$ such that 
    \begin{equation}
        \label{eq:realSchur}
        M = QTQ^T,
    \end{equation}
    where $T \in \Rnm{\ndim}{\ndim}$ is block upper-triangular with block of size up to $2 \times 2$ accounting for complex conjugate eigenvalues of $M$.  The columns of $ Q $, known as Schur vectors, can be arranged so that the eigenvalues $ \eival_i $ of $ M $ appear in any order along the diagonal of $ T $.
\end{proposition}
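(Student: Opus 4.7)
The plan is to prove existence by induction on the dimension $\ndim$ and then address the reordering claim separately as a consequence of the swap-of-adjacent-blocks procedure. The base cases $\ndim=1$ (trivial) and $\ndim=2$ (either two real eigenvalues, handled by a real Schur vector, or a complex conjugate pair, in which case $T = M$ itself is a single $2\times 2$ block and $Q=I$) are immediate.

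For the inductive step on a matrix $M \in \Rnm{\ndim}{\ndim}$ with $\ndim \geq 3$, I split on whether $M$ has a real eigenvalue. If $M$ has a real eigenvalue $\eival$ with unit real eigenvector $x$, I complete $x$ to an orthonormal basis forming $Q_1 = [x \; X_1] \in \Rnm{\ndim}{\ndim}$, so that
\begin{equation*}
    Q_1^T M Q_1 = \begin{pmatrix} \eival & * \\ 0 & M_1 \end{pmatrix}
\end{equation*}
with $M_1 \in \Rnm{\ndim-1}{\ndim-1}$, and apply the induction hypothesis to $M_1$. If $M$ has no real eigenvalue, pick $\eival = a + ib$ with $b \neq 0$ and a complex eigenvector $z = u + iv$, $u,v \in \Rn{\ndim}$. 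The identities $\A u = a u - b v$ and $\A v = b u + a v$ show that $\Span{u,v}$ is a real $\A$-invariant subspace of dimension exactly $2$ (any lower dimension would force a real eigenvalue). Choose an orthonormal basis $[q_1 \; q_2]$ for this subspace, extend to orthonormal $Q_1 = [q_1 \; q_2 \; X_1]$, which gives
\begin{equation*}
    Q_1^T M Q_1 = \begin{pmatrix} B & * \\ 0 & M_2 \end{pmatrix}
\end{equation*}
with $B \in \Rnm{2}{2}$ whose eigenvalues are $\eival, \bar\eival$ and $M_2 \in \Rnm{\ndim-2}{\ndim-2}$; inductive hypothesis applied to $M_2$ finishes the construction, and composing with the block-diagonal $\diag(I_1,Q_1')$ or $\diag(I_2,Q_1')$ yields the required $Q$.

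For the ordering claim, it suffices to show that two adjacent diagonal blocks in a real Schur form with disjoint spectra can be swapped by an orthogonal similarity. Consider the principal $m \times m$ submatrix
\begin{equation*}
    S = \begin{pmatrix} A_{11} & A_{12} \\ 0 & A_{22} \end{pmatrix}
\end{equation*}
where $A_{11}, A_{22}$ are either $1\times 1$ or $2\times 2$ and $\sigma(A_{11}) \cap \sigma(A_{22}) = \emptyset$. Since the spectra are disjoint, the Sylvester equation $A_{11} X - X A_{22} = A_{12}$ has a unique solution $X$. Then the similarity by $\begin{pmatrix} I & X \\ 0 & I \end{pmatrix}$ block-diagonalizes $S$; computing a thin QR factorization of $\begin{pmatrix} -X \\ I \end{pmatrix}$ and combining with an orthogonal completion produces an orthogonal $U$ such that $U^T S U$ is block upper-triangular with $A_{22}$ preceding $A_{11}$. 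Embedding $U$ with identity blocks outside the two adjacent positions gives a global orthogonal transformation swapping these blocks without affecting the others. Any desired ordering of the eigenvalues on the diagonal is then realized by a finite sequence of adjacent swaps.

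The main obstacle is the reordering step: writing down the Sylvester-equation-and-QR swap cleanly while handling the mixed $1\times 1$ and $2\times 2$ block sizes (so $A_{11}, A_{22}$ may each be either size), and checking that the resulting orthogonal transformation preserves the block upper-triangular structure outside the swapped pair. The existence half of the proposition is routine induction; it is the constructive swap underlying the reordering assertion that carries the real content and that I would present carefully, either in detail or by reference to the standard treatment in \cite{Golub2013Matrixcomputations}.
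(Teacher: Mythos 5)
The paper does not prove \cref{prop:realSchur}; it is explicitly recalled from \cite{Golub2013Matrixcomputations} and used as a black box, so there is no paper proof to compare against. Your proposal is the canonical textbook argument, and it is essentially sound. The existence half — induction on $\ndim$, deflating by one dimension when a real eigenvalue exists and by two when only a complex conjugate pair $\eival = a \pm ib$, $b \neq 0$, is available, using that $M u = a u - b v$, $M v = b u + a v$ forces $\Span{u,v}$ to be a genuinely two-dimensional real invariant subspace — is exactly the standard construction. The reordering half via solving a Sylvester equation to locate the invariant subspace of the trailing block, QR-factorizing it, and embedding the resulting small orthogonal factor into the identity is the Bai--Demmel swap, which the paper itself cites elsewhere as \cite{Bai1993swappingdiagonalblocks}.

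Two small points worth tidying. First, there is a sign slip in the remark that $\begin{pmatrix} I & X \\ 0 & I \end{pmatrix}$ block-diagonalizes $S$: with your convention $A_{11} X - X A_{22} = A_{12}$, the conjugation produces off-diagonal block $A_{11}X - XA_{22} + A_{12} = 2A_{12}$, so for that remark you would want $A_{11}X - XA_{22} = -A_{12}$. However, the part of the argument you actually use — that $\begin{pmatrix} -X \\ I \end{pmatrix}$ spans the $S$-invariant subspace associated with $\sigma(A_{22})$ — is consistent with your Sylvester convention, so the swap construction itself is correct as written. Second, the claim "in any order" implicitly means modulo keeping each complex conjugate pair together in a single $2\times 2$ block; the adjacent-swap argument only moves whole blocks past one another, which is exactly what the statement requires, but it is worth stating the caveat explicitly since a literal reading of "any order" would be false for non-real $\eival$.
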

Note that we can reduce $\Bsa$ to Schur form through a similarity relation: if $\Bsa = Q \Tsa Q^T$ then right-multiplying \cref{eq:KD} by $Q$ gives
\begin{equation}
    \A (\U Q) = (\U Q) \Tsa + \ulast \blast\tsp Q
\end{equation}
which is an equivalent Krylov decomposition. Combined with the fact that we can always obtain a sketch-orthonormal Krylov decomposition as discussed in the proof of \cref{th:KDeqRA}, we obtain the following definition.
\begin{definition}[\rKS{} decomposition]
    \label{def:rKS}
    A \rKS{} decomposition of order $\sa$ for $\A$ is the decomposition
    \begin{equation}
    \label{eq:rKS}
        \A \U = \U \Tsa + \ulast \blast\tsp,
    \end{equation}
    where $\Tsa \in \Rnm{\sa}{\sa}$ is of rank $\sa$ and block upper-triangular as defined in \cref{prop:realSchur}, and $(\U \; \ulast)$ is a $\sa+1$ dimensional sketch-orthonormal basis for $\K_{\sa+1} \subset \Rn{\ndim}$.
\end{definition}
If \cref{eq:KD} holds, then a decomposition as \cref{eq:rKS} always exists. The different decompositions that we use are visually summarized in \cref{fig:tikzdecompos}. The symbol $\Omperpornot$ means that the associated bases, noted $U$ -or $V$ in the Arnoldi setting-, can be either $\ell_2$-orthonormal or sketch-orthonormal depending on the framework considered.

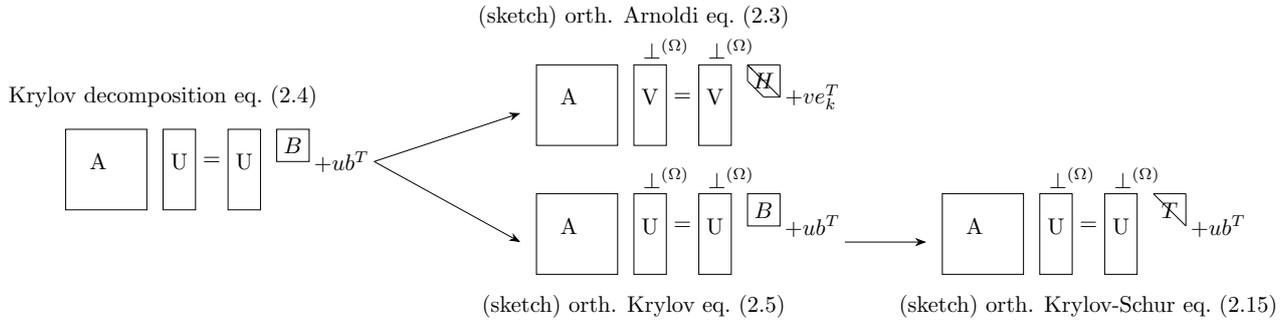
\begin{figure}[!ht]
\centering
\resizebox{1\textwidth}{!}{%
\begin{circuitikz}
\tikzstyle{every node}=[font=\normalsize]
\node [font=\normalsize] at (6.5,12.25) {Krylov decomposition \cref{eq:KD}};
\draw  (5,11.75) rectangle (6.25,10.5);
\draw  (6.5,11.75) rectangle (7,10.5);
\node [font=\normalsize] at (7.25,11.25) {=};
\draw  (7.5,11.75) rectangle (8,10.5);
\node [font=\normalsize] at (5.5,11.25) {A};
\node [font=\normalsize] at (6.75,11.25) {U};
\node [font=\normalsize] at (9.25,11.25) {$+ ub^T$};
\node [font=\normalsize] at (7.75,11.25) {U};
\draw  (8.25,11.75) rectangle (8.75,11.25);
\node [font=\normalsize] at (8.5,11.5) {$B$};
\draw [->, >=Stealth] (9.75,11.25) -- (12,12);

\draw [->, >=Stealth] (9.75,11.25) -- (12,10);
\node [font=\normalsize] at (13.75,13.5) {(sketch) orth. Arnoldi \cref{eq:rArno}};
\draw  (12.25,12.75) rectangle (13.5,11.5);
\draw  (13.75,12.75) rectangle (14.25,11.5);
\node [font=\normalsize] at (14.5,12.25) {=};
\draw  (14.75,12.75) rectangle (15.25,11.5);
\node [font=\normalsize] at (12.75,12.25) {A};
\node [font=\normalsize] at (14,12.25) {V};
\node [font=\normalsize] at (16.5,12.25) {$+ v e_k^T$};
\node [font=\normalsize] at (15,12.25) {V};
\node [font=\normalsize] at (15.75,12.5) {$H
$};
\node [font=\normalsize] at (14.25,13) {$\Omperpornot$};
\node [font=\normalsize] at (15.25,13) {$\Omperpornot$};
\draw [short] (15.5,12.75) -- (16,12.75);
\draw [short] (16,12.75) -- (16,12.25);
\draw [short] (15.5,12.75) -- (16,12.25);
\draw [dashed] (15.5,12.75) -- (15.75,12.5);
\draw [short] (15.5,12.75) -- (15.5,12.5);
\draw [short] (15.5,12.5) -- (15.75,12.25);
\draw [short] (15.75,12.25) -- (16,12.25);
\node [font=\normalsize] at (13.75,9) {(sketch) orth. Krylov \cref{eq:sketchorthKD}};
\draw  (12.25,10.75) rectangle (13.5,9.5);
\draw  (13.75,10.75) rectangle (14.25,9.5);
\node [font=\normalsize] at (14.5,10.25) {=};
\draw  (14.75,10.75) rectangle (15.25,9.5);
\node [font=\normalsize] at (12.75,10.25) {A};
\node [font=\normalsize] at (14,10.25) {U};
\node [font=\normalsize] at (16.5,10.25) {$+ u b^T$};
\node [font=\normalsize] at (15,10.25) {U};
\draw  (15.5,10.75) rectangle (16,10.25);
\node [font=\normalsize] at (15.75,10.5) {$B$};
\node [font=\normalsize] at (14.25,11) {$\Omperpornot$};
\node [font=\normalsize] at (15.25,11) {$\Omperpornot$};
\draw [->, >=Stealth] (17,10) -- (18.25,10);
\node [font=\normalsize] at (20.75,9) {(sketch) orth. Krylov-Schur \cref{eq:rKS}};
\draw  (18.5,10.75) rectangle (19.75,9.5);
\draw  (20,10.75) rectangle (20.5,9.5);
\node [font=\normalsize] at (20.75,10.25) {=};
\draw  (21,10.75) rectangle (21.5,9.5);
\node [font=\normalsize] at (19,10.25) {A};
\node [font=\normalsize] at (20.25,10.25) {U};
\node [font=\normalsize] at (22.75,10.25) {$+ u b^T$};
\node [font=\normalsize] at (21.25,10.25) {U};
\node [font=\normalsize] at (22,10.5) {$T$};
\node [font=\normalsize] at (20.5,11) {$\Omperpornot$};
\node [font=\normalsize] at (21.5,11) {$\Omperpornot$};
\draw  (21.75,10.75) rectangle (22.25,10.75);
\draw  (22.25,10.75) rectangle (22.25,10.25);
\draw [short] (21.75,10.75) -- (22.25,10.25);
\end{circuitikz}
}%
\caption{Different decompositions of $\A$ based on Krylov subspaces}
\label{fig:tikzdecompos}
\end{figure}

\section{The \rKS{} method}
\label{sec:rKS}
In this section, we introduce the randomized Krylov-Schur eigensolver, presented in \Cref{alg:rKS}. We show that an iteration of rKS is equivalent to an iteration of rIRA, and describe a structure-preserving sketched re-orthogonalization process.

\subsection{Deriving the algorithm}
The main idea behind the Krylov-Schur (KS) algorithm is to simplify the implementation of the Implicitly Restarted Arnoldi (IRA) method with exact shifts. IRA with exact shifts can be summarized as follows: given a target number $\sa$ of eigenvalues and a total Krylov dimension $\ba$, at each iteration an Arnoldi factorization of size $\ba$ is computed. Then, $\nshi$ unwanted eigenvalues are selected among the $\ba$ eigenvalues of the Hessenberg matrix and are removed from $\Hba$ using the implicit shifted QR algorithm described in \cite{Golub2013Matrixcomputations,Sorensen1992ImplicitApplicationPolynomial,Lehoucq1995DeflationTechniquesImplicitly}. This is called the contraction step, resulting in an Arnoldi factorization of size $\sa$. Finally, the factorization is expanded back to size $\ba$ in the expansion step. In theory, the errors on the $\sa$ desired eigenpairs decrease after each contraction, which is equivalent to polynomial filtering (see \cref{sec:equivKSIRA}).

However, the contraction step is not always stable in practice, as shown in \cite[Chapter 5]{Lehoucq1995DeflationTechniquesImplicitly}, where a loss of forward stability of the implicit shifted QR method is demonstrated. In contrast, significant progress has been made in stable reordering of Schur decompositions; see \cite{Bai1993swappingdiagonalblocks} for LAPACK algorithms, and \cite{Kressner2006Blockalgorithmsreordering} for block versions. The deterministic KS algorithm uses Schur forms to perform the contraction step more reliably than shifted QR steps. Moreover, deflation techniques can be implemented using Schur form reordering rather than more complex methods in IRA, making KS efficient and practical. Below, we show how these ideas from \cite{Stewart2002KrylovSchurAlgorithm} extend to the randomized setting, deriving both contraction and expansion steps.

Consider a \rKS{} factorization of length $\ba$ as in \Cref{def:rKS},
\begin{equation}
\label{eq:rKS-ba}
    \A \Uba = \Uba \Tba + \uba \bba\tsp,
\end{equation}
where $\Om$ is an $\epsilon$-embedding for $\K_{\ba+1}$. Any truncation of \cref{eq:rKS-ba} is also a \rKS{} factorization. For example, partition as follows:
\begin{equation}
    \label{eq:partioning_Uk_Up}
    \A ( \U \; U_\nshi ) = ( \U  \; U_\nshi ) 
    \begin{pmatrix}
        \Tsa & \tilde{T} \\
        0 & T_\nshi
    \end{pmatrix}
    + \uba (\blast\tsp \; b_\nshi\tsp).
\end{equation}
Truncating to the first $\sa$ columns yields:
\begin{equation}
    \label{eq:truncate-sa-rksfacto}
    \A \U = \U \Tsa +  \uba \blast\tsp.
\end{equation}
It is clear that $\U$ is sketch-orthonormal since $\Uba = ( \U \; U_\nshi )$ is, and $\uba$ is sketch-orthogonal to $\U$. Moreover, $\Tsa$ is block upper triangular as in \cref{prop:realSchur}, provided the partitioning in \cref{eq:partioning_Uk_Up} keeps the eigenvalues of $\Tsa$ together under complex conjugation. The rKS algorithm proceeds as follows:
\begin{enumerate}
    \item (Contraction step) From a randomized Krylov-Schur factorization of size $\ba$ as in \cref{eq:rKS-ba}, use stable methods to reorder the Schur form and move $\nshi$ unwanted eigenvalues of $\Tba$ to its lower-right block $T_\nshi$. This is equivalent to placing the $\sa$ desired eigenvalues in the upper-left block $\Tsa$.
    \item (Contraction step) Truncate the factorization to size $\sa$ as in \cref{eq:truncate-sa-rksfacto}.
    \item (Expansion step) Extend this factorization to size $\ba$ using randomized Arnoldi, see \cref{alg:extend-rKS}.
    \item Obtain a \rKS{} factorization of size $\ba$ from the previous randomized Krylov factorization via a unitary transformation.
\end{enumerate}
The method is described in \cref{alg:rKS}, where indices are omitted except for the Rayleigh quotient matrices $B$ or $T$ for clarity. A visual representation is given in \cref{fig:tikzrKS}.
\begin{algorithm}[!htbp]
    \caption{randomized Krylov-Schur (rKS)}
    \label{alg:rKS}
    \begin{algorithmic}[1]
        \Require $\A \in \Rnm{n}{n}$, $\sa$ the number of wanted eigenvalues, $\ba$ the Krylov dimension, $\Om \in \Rnm{\dsk}{n}$ an  $\epsilon$-embedding for $\K_{\ba+1} $, $\uinit \in \Rn{n}$ with $\norm{\Omega \uinit} =1$ and a residual error tolerance $\eta$. 
        \Ensure A \rKS{} decomposition $\A \tilde{U} = \tilde{U} \tilde{\Tsa} + \tilde{u} \tilde{b}\tsp$ such that $\enorm{\A (\tilde{U} Y) - (\tilde{U} Y) \tilde{\Lambda}_k} \leq \eta $ where $\tilde{U} Y$ and $\tilde{\Lambda}_k$ are the sought $\sa$ Ritz eigenvectors and Ritz eigenvalues, respectively.
        \vspace{0.2cm}
        \State Perform $\ba$ steps of the \rArno{} procedure starting from $\uinit$ to obtain a sketch-orthonormal Krylov decomposition
        $\A U = U \Bba + u b\tsp$ and  $(S \; s) = (\Om U \; \Om u)$ \label{alg:line:rKS-arnoldi-Init}
        \While{convergence not declared} 
        \State Reduce $\Bba$ to Schur form with $\Bba = Q_1 \Tba Q_1^T$ to obtain a \rKS{} decomposition $$\A U Q_1 = U Q_1 \Tba + u b\tsp Q_1$$
        \vspace{-0.2cm}
        \State Move around unwanted eigenvalues to the bottom-right part of $\Tba$ using an orthonormal similarity transformation $Q_2$ s.t.\@ $\tTba = Q_2^T \Tba Q_2$ to obtain an ordered \rKS{} decomposition 
        \begin{equation*}
            \A U Q_1 Q_2 = U Q_1 Q_2 \tTba + u b\tsp Q_1 Q_2.
        \end{equation*}
        \vspace{-0.2cm}
        \State Truncate this \rKS{} factorization with $\tilde{U} \coloneqq U Q_1 Q_2 (e_1, \dots, e_\sa)$, $\tilde{u} \coloneqq u$, $\tilde{b} \coloneqq b\tsp Q_1 Q_2(e_1, \dots, e_\sa)$ and $\tilde{S} = S Q_1 Q_2(e_1, \dots, e_\sa)$, $\tilde{s} = s$ to obtain the contracted \rKS{} factorization $$\A \tilde{U} = \tilde{U} \tilde{\Tsa} + \tilde{u} \tilde{b}\tsp.$$ \label{alg:line:rKS-truncate}
        \vspace{-0.2cm}
        \State Compute the eigendecomposition of the small factor $\tTsa Y = Y \tilde{\Lambda}_k$ and obtain the residual errors using \cref{sec:errormonitoring}. \label{alg:line:rksreserrors}
        \State If the tolerance $\eta$ is not satisfied, extend this factorization to a length $\ba$ sketch-orthonormal Krylov decomposition $\A U = U \Bba + u b\tsp$ and  $(S \; s) = (\Om U \; \Om u)$ by using the \rArno{} procedure from \cref{alg:extend-rKS}. \label{alg:line:extend-facto}
        \EndWhile 
    \end{algorithmic}
\end{algorithm}

\begin{figure}[!ht]
\centering
\resizebox{0.8\textwidth}{!}{%
\begin{circuitikz}
\tikzstyle{every node}=[font=\LARGE]
\draw [<->, >=Stealth] (23.5,15.25) .. controls (24.25,16.75) and (26,15.25) .. (24.25,14.5);
\node [font=\LARGE] at (8.25,14.25) {$A$};
\draw  (8.75,15.5) rectangle (10,13);
\node [font=\LARGE] at (10.75,14.25) {=};
\draw  (11.25,15.5) rectangle (12.5,13);
\draw  (12.5,15.5) rectangle (12.75,13);
\node [font=\LARGE] at (13,14.25) {+};
\draw [short] (13.75,15.5) -- (15,15.5);
\draw [short] (13.75,15.5) -- (15,14.25);
\draw [short] (15,15.5) -- (15,14.25);
\draw [short] (13.75,14.25) -- (15,14.25);
\draw [short] (15,14.25) -- (15,14);
\draw [short] (15,14) -- (13.75,14);
\draw [short] (13.75,14) -- (13.75,14.25);
\draw [->, >=Stealth] (15,13.25) -- (16.75,13.25);
\node [font=\LARGE] at (17.5,14.25) {$A$};
\draw  (18.25,15.5) rectangle (19.5,13);
\node [font=\LARGE] at (20,14.25) {=};
\draw  (20.75,15.5) rectangle (22,13);
\draw  (22,15.5) rectangle (22.25,13);
\node [font=\LARGE] at (22.5,14.25) {+};
\draw [short] (23.25,15.5) -- (24.5,15.5);
\draw [short] (23.25,15.5) -- (24.5,14.25);
\draw [short] (24.5,15.5) -- (24.5,14.25);
\draw [short] (23.25,14.25) -- (24.5,14.25);
\draw [short] (24.5,14.25) -- (24.5,14);
\draw [short] (24.5,14) -- (23.25,14);
\draw [short] (23.25,14) -- (23.25,14.25);
\draw [ dashed] (18,15.75) rectangle  (19,12.75);
\draw [ dashed] (20.5,15.75) rectangle  (21.5,12.75);
\draw [->, >=Stealth] (20.25,12.25) -- (20.25,11.25);
\node [font=\large] at (12.75,15.75) {$u$};
\draw [ dashed] (23,15.75) rectangle  (24,13.75);
\node [font=\LARGE] at (17.5,9.5) {$A$};
\draw  (18.25,10.5) rectangle (19,8);
\node [font=\LARGE] at (20,9.25) {=};
\draw  (20.5,10.5) rectangle (21.25,8);
\draw  (21.25,10.5) rectangle (21.5,8);
\node [font=\LARGE] at (22,9.25) {+};
\draw [short] (22.75,10.5) -- (23.5,10.5);
\draw [short] (22.75,10.5) -- (23.5,9.75);
\draw [short] (23.5,10.5) -- (23.5,9.75);
\draw [short] (22.75,9.75) -- (23.5,9.75);
\draw [short] (22.75,9.75) -- (22.75,9.5);
\draw [short] (22.75,9.5) -- (23.5,9.5);
\draw [short] (23.5,9.5) -- (23.5,9.75);
\draw [->, >=Stealth] (17.5,10.25) -- (15.75,10.25);
\node [font=\LARGE] at (8.25,9.25) {$A$};
\draw  (8.75,10.5) rectangle (10,8);
\node [font=\LARGE] at (10.75,9.25) {=};
\draw  (11.25,10.5) rectangle (12.5,8);
\draw  (12.5,10.5) rectangle (12.75,8);
\node [font=\LARGE] at (13,9.25) {+};
\draw [->, >=Stealth] (10.75,11.25) -- (10.75,12.25);
\draw [short] (13.75,10.5) -- (14.5,10.5);
\draw [short] (13.75,10.5) -- (14.5,9.75);
\draw [short] (14.5,10.5) -- (14.5,9.75);
\draw [short] (13.75,9.75) -- (14.5,9.75);
\draw [short] (13.75,9.75) -- (13.75,9.5);
\draw [short] (13.75,9.5) -- (14.5,9.5);
\draw [short] (14.5,9.5) -- (14.5,9.75);
\draw [short] (14.5,10.5) -- (15,10.5);
\draw [short] (15,10.5) -- (15,9.25);
\draw [short] (14.5,9.5) -- (14.75,9.25);
\draw [short] (14.75,9.25) -- (15,9.25);
\draw  (13.75,9.25) rectangle (15,9);

\node [font=\large] at (15.25,14) {$b$};
\node [font=\LARGE] at (9.5,14.25) {$U$};
\node [font=\LARGE] at (14.75,15) {$T$};
\node [font=\large] at (9.25,12) {Reduce to};
\node [font=\large] at (16,12.75) {Reorder Ritz values};
\node [font=\large] at (22.50,12) {Truncate by retaining };
\node [font=\large] at (16.75,10.75) {Extend with randomized Arnoldi};
\node [font=\large] at (22.50,11.5) {dotted sections};
\node [font=\large] at (9.25,11.5) {Schur form};
\node [font=\LARGE] at (12,14.25) {$U$};
\end{circuitikz}
}%
\caption{Visual representation of the randomized Krylov-Schur from \cref{alg:rKS}}
\label{fig:tikzrKS}
\end{figure}
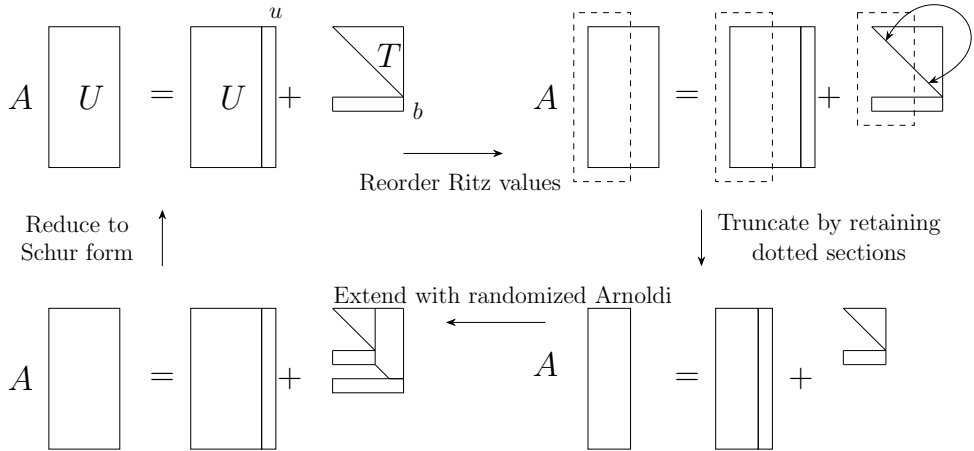

We now comment on two aspects of rKS: extending a length-$\sa$ Krylov-Schur decomposition to length $\ba$, and efficiently recovering sketched quantities during contraction.  First, the extension of a length-$\sa$ Krylov-Schur decomposition proceeds as follows. $(\U \; \ulast)$ is a sketch-orthonormal basis for $\K_{\sa+1}$, satisfying the rewriting of \cref{eq:rKS},
\begin{equation}
     \A \U = (\U \; \ulast) 
     \begin{pmatrix}
         \Tsa \\
         \blast\tsp
     \end{pmatrix}.
\end{equation}
 Then, $U_{\sa +1} = (\U \; \ulast)$, $S_{\sa +1} = (\Ssa \; \slast)$ and $B_{\sa+1} = 
        \begin{pmatrix}
         \Tsa \\
         \blast\tsp
        \end{pmatrix}$.
Next, $\A \ulast$ is computed and sketch-orthogonalized against $U_{\sa +1}$, and from there, a standard randomized Arnoldi procedure can be applied. This is detailed in \cref{alg:extend-rKS}. Note that \cref{alg:extend-rKS} reduces to a randomized Arnoldi procedure of length $\nshi$ when there is no initial decomposition, i.e.\@, when $\sa = 0$. In this case, given a starting vector $\tilde{u}_1 \in \Rn{n}$, simply initialize with $u_1 = \tilde{u}_1 / \norm{\Om \tilde{u}_1}$ and $b_1 = \norm{\Om \tilde{u}_1}$. This is used in \cref{alg:line:rKS-arnoldi-Init} of \cref{alg:rKS}.
\begin{algorithm}[!htbp]
    \caption{Extend a \rKS{} factorization}
    \label{alg:extend-rKS}
    \begin{algorithmic}[1]
        \Require A \rKS{} factorization $\A \U = \U \Bsa + \ulast \blast\tsp$ and the sketch $(\Ssa \; \slast) =  (\Om \U \; \Om \ulast)$.
        \Ensure A sketch-orthonormal Krylov decomposition $\A \Uba = \Uba \Bba + \uba \bba\tsp$ and the sketch $(\Sba \; \sba) = (\Om \Uba \; \Om \uba)$.
        \State $U_{\sa +1} = (\U \; \ulast)$ and $S_{\sa +1} = (\Ssa \; \slast)$ \label{alg:line:extendrKS-init}
        \State $B_{\sa+1} =      
        \begin{pmatrix}
         \Tsa \\
         \blast\tsp
        \end{pmatrix}$
        \For{$j = \sa+1, \dots, \ba$} 
        \State $w_j = \A u_j$  \label{alg:line:multAextend}  \Comment{Apply $\A$}
        \State $z_j = \Om w_j$ \Comment{Sketching (optional: sketch-orthogonalize against locked vectors, see \cref{sec:singlevecdef})} \label{alg:line:deflation}
        \State  $b_{j+1} = (\Om U_j)^\dag z_j = S_j^\dag z_j$ \Comment{Low dimensional projection step} \label{alg:line:lowprojstep}
        \State $\tilde{u}_{j+1} = w_j - U_j b_{j+1}  = w_j - U_j (\Om U_j)^\dag  \Om w_j = (I- P_\K^\Lspace)w_j$ \Comment{High dimension update}
        \State $\tilde{s}_{j+1} = \Om \tilde{u}_{j+1}$ and $\beta_{j+1} = \norm{\Om \tilde{u}_{j+1}}$ \Comment{Sketching step}
        \State $U_{j+1} = (U_j \; \uba = \tilde{u}_{j+1} / \beta_{j+1})$  \Comment{Augment basis}
        \State $S_{j+1} = (S_j \; \sba = \tilde{s}_{j+1} / \beta_{j+1})$  \Comment{Augment sketched basis} \label{alg:line:extendrKS-augmentS}
        \State $B_{j+1} =      
        \begin{pmatrix}
         B_j & b_{j+1}  \\
         0 \dots 0 & \beta_{j+1}
        \end{pmatrix}$  \Comment{Augment the Rayleigh quotient}
        \EndFor
    \end{algorithmic}
\end{algorithm}

Second, we discuss how to handle the sketch of the Krylov basis to avoid repeated sketching operations. In \cref{alg:extend-rKS}, the quantity $\Om U_j$ is written for clarity, but in practice it is recommended to store the sketch $S$ of the Krylov basis $U$, avoiding repeated sketching of $U_j$ at each iteration. This is not expensive in memory, since these are vectors in $\Rn{\dsk}$ with $\dsk \ll n$. This storage of $S$ is performed in \cref{alg:line:extendrKS-init} and \cref{alg:line:extendrKS-augmentS} of \cref{alg:extend-rKS}. To efficiently perform the contraction step on $S$ in rKS, note that from the factorization in \cref{alg:line:rKS-truncate} of \cref{alg:rKS},
\begin{equation}
    \tSsa \coloneqq \Om \tU = \Om \Uba Q_1 Q_2 (e_1, \dots, e_\sa) = \Sba  Q_1 Q_2 (e_1, \dots, e_\sa)
\end{equation}
and 
\begin{equation}
    \tslast \coloneqq \Om \tulast = \Om \uba = \sba.
\end{equation}
This explains the efficient truncations performed in \cref{alg:line:rKS-truncate} for $\tSsa$ and $\tslast$, instead of fully sketching $(\tU \; \tulast)$.

\subsection{Equivalence with randomized Implicitly Restarted Arnoldi}
\label{sec:equivKSIRA}
The following result establishes an equivalence between the randomized Implicitly Restarted Arnoldi method from \cite{Damas2024RandomizedImplicitlyRestarted} and the \rKS{} method.
\begin{theorem}
    Assume a randomized Arnoldi factorization as in \cref{def:rArno} and its equivalent \rKS{} factorization from \cref{th:KDeqRA}, both with the same $\epsilon$-embedding $\Om$ for $\Span{\V} = \Span{\U}$.
    \begin{align}
        \left\{
        \begin{aligned}
            \A \V & = \V \Hsa + \beta \ulast e_{\sa}^T \\       
            \A \U & = \U \Tsa + \ulast \blast\tsp.
        \end{aligned}
        \right.
    \end{align}
    Then, after a cycle of expansion and contraction steps using the rIRA and rKS methods, respectively, and assuming the retained Ritz values are distinct from the discarded ones, the resulting factorizations are equivalent in the sense that their spans are equal.
\end{theorem}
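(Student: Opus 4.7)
The plan is to show that both contraction procedures output length-$\sa$ factorizations whose bases span the same polynomial-filtered Krylov subspace $\K_{\sa+1}\bigl(\A,\polp(\A) \uinit\bigr)$, where $\polp(z) = \prod_{i=1}^{\nshi}(z - \shi_i)$ is the polynomial with roots at the $\nshi$ discarded Ritz values and $\uinit$ is the common starting vector. Once this is established, the two resulting factorizations are equivalent in the sense required by the statement.

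First I would apply \cref{th:KDeqRA} to the two expanded factorizations of size $\ba$ to conclude that both span $\K_{\ba+1}(\A,\uinit)$, so the contraction steps act on equivalent Krylov decompositions. For the rIRA side I would then invoke the polynomial filtering interpretation of implicit shifted QR that is already set in the randomized context in \cite{Damas2024RandomizedImplicitlyRestarted}: applying $\nshi$ implicit shifts with the discarded Ritz values as shifts yields a length-$\sa$ randomized Arnoldi factorization whose leading basis vector is collinear with $\polp(\A)\uinit$, so the span of its basis is exactly $\K_{\sa+1}(\A,\polp(\A) \uinit)$.

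For the rKS side, after the similarity $Q_1 Q_2$ that reduces $\Bba$ to an ordered real Schur form $\tTba$, the retained basis $\tilde{U} = \Uba Q_1 Q_2 (e_1,\dots,e_\sa)$ has columns that span the invariant subspace of $\Bba$ associated with the retained Ritz values, which, under the distinctness hypothesis, coincides with $\range(\polp(\Bba))$. I would then iterate the Krylov identity $\A \Uba = \Uba \Bba + \uba \bba\tsp$ to express $\A^j \Uba e_1 = \Uba \Bba^j e_1$ plus correction terms supported in $\Span{\uba}$, and deduce that $\Uba \polp(\Bba) e_1 = \polp(\A)\uinit$ modulo $\Span{\uba}$. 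Combining this with the invariance of $\range(\polp(\Bba))$ under $\Bba$ and augmenting $\Span{\tilde{U}}$ with $\uba$ would then recover exactly $\K_{\sa+1}(\A,\polp(\A) \uinit)$, matching the rIRA side.

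The main obstacle is controlling the correction terms that appear each time $\Bba$ is substituted for $\A$ in the Krylov identity: each application produces a rank-one residual in $\Span{\uba}$ weighted by an entry of $\bba\tsp$, and one must verify that, under the band structure inherited from the Arnoldi equivalence of \cref{th:KDeqRA} and the ordering of the Schur form, these residuals do not enlarge $\Span{\tilde{U},\uba}$ for the first $\sa$ iterates of $\Bba$. This mirrors Stewart's analysis in \cite{Stewart2002KrylovSchurAlgorithm} and transfers unchanged to the randomized setting because the Krylov identity and the polynomial filtering calculation are purely algebraic and do not depend on the inner product defining sketch-orthogonality.
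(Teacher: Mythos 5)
The paper proves this by a similarity argument: since $\Span{\Vba} = \Span{\Uba}$ and both bases are sketch-orthonormal, one writes $\Uba = \Vba W$ with $W$ orthonormal, deduces $\Tba = W^T \Hba W$, and then uses uniqueness of the (simple) invariant subspace to conclude $R(e_1,\dots,e_\sa)$ and $W^T Q(e_1,\dots,e_\sa)$ have the same span, hence $\Span{\tUba(e_1,\dots,e_\sa)} = \Span{\tVba(e_1,\dots,e_\sa)}$. Your plan instead tries to show both contracted spans equal the polynomial-filtered Krylov subspace $\K_{\sa+1}\bigl(\A,\polp(\A)\uinit\bigr)$, which is a genuinely different route.

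The route has a concrete gap on the rKS side. You want to iterate the Krylov identity to get $\A^j \Uba e_1 = \Uba \Bsa^j e_1$ modulo $\Span{\uba}$. But unlike the Arnoldi case, the rKS decomposition has a \emph{dense} residual vector: $\bba^T = \beta \, e_\ba^T W$, where $W$ is the orthonormal change of basis, and this has no reason to vanish on $e_1$ or on $\Bba^i e_1$. Already at the second step one gets
\begin{equation*}
\A^2 \Uba e_1 = \Uba \Bba^2 e_1 + (\bba^T \Bba e_1)\,\uba + (\bba^T e_1)\,\A\uba,
\end{equation*}
and $\A\uba \notin \Span{\Uba\;\uba}$: the correction terms leak out of the decomposition space. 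In the Arnoldi setting this is saved by $b = \beta e_\ba$ and the Hessenberg structure of $H$, which force $e_\ba^T H^j e_1 = 0$ for $j \le \ba-2$; the ordered Schur form $\Tba$ with dense $\bba$ has no such annihilation. A second problem: $\Uba e_1 = \Vba W e_1$ is not the Arnoldi starting vector $\uinit = \Vba e_1$, so even the $j=0$ term of your identity $\Uba \polp(\Bba) e_1 = \polp(\A)\uinit$ is already off by the rotation $W$. You flag the correction terms as ``the main obstacle'' and claim Stewart's analysis transfers ``unchanged'' because the computation is algebraic, but this misses the point: the obstacle is not the randomized inner product, it is the loss of the Hessenberg/last-column structure when passing from Arnoldi to Krylov--Schur, and Stewart himself does not control it by iterating the Krylov identity. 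Both Stewart and the present paper bridge the two factorizations through the explicit similarity $\Tba = W^T \Hba W$ and the uniqueness of the simple invariant subspace, which is the step your plan is missing.
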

\begin{proof}
    Consider a randomized Arnoldi factorization of length $\sa$ and its equivalent \rKS{} decomposition after $\Hsa$ has been reduced to Schur form:
    \begin{align}
        \left\{
        \begin{aligned}
            \A \V & = \V \Hsa + \beta \ulast e_{\sa}^T \\       
            \A \U & = \U \Tsa + \ulast \blast\tsp
        \end{aligned}
        \right.
    \end{align}
    By \cref{th:KDeqRA}, $\Span{\V} = \Span{\U} = \K_\sa(\A, \V e_1)$. Since $\ulast$ is already sketch-orthogonal to $\V$, it remains unchanged in the equivalent \rKS{} decomposition. The expansion step using randomized Arnoldi yields bases $\Vba$ and $\Uba$ spanning the same subspace, namely $\K_{\ba}(\A, \V e_1)$, and both methods share the same last vector $\uba$.

    The rIRA method uses the implicit shifted QR method to discard the $\nshi$ unwanted Ritz values, while rKS reorders the Schur decomposition to move them to the lower-right block. In both cases, the reordering does not affect the last vector $\uba$. it modifies $\hat{\beta}$ to $\tilde{\beta}$ for rIRA and $\bba$ to $\tbba$ for rKS. Denote the unordered and ordered factorizations as:
    \begin{align}
        \left\{
        \begin{aligned}
            \A \Vba & = \Vba \Hba + \hat{\beta} \uba e_{\ba}^T \\       
            \A \Uba & = \Uba \Tba + \uba \bba\tsp
        \end{aligned}
        \right.
        \text{ and }
        \left\{
        \begin{aligned}
            \A \tVba & = \tVba \tHba + \tilde{\beta} \uba e_{\ba}^T \\       
            \A \tUba & = \tUba \tTba + \uba \tbba\tsp
        \end{aligned}
        \right.,
    \end{align}
    respectively. Our goal is to prove that $\Span{\tVba(e_1,\dots,e_\sa)} = \Span{\tUba(e_1,\dots,e_\sa)} $ so that the decompositions are equivalent in their range after the truncation step. Since $\Span{\Vba} = \Span{\Uba}$, there exists a non-singular matrix $W \in \Rnm{\ba}{\ba}$ such that 
    $$\Uba = \Vba W.$$
    This gives $(\Om \Uba)^T (\Om \Uba) = W^T (\Om \Vba)^T (\Om \Vba) W = W^T W$, but we also have that $(\Om \Uba)^T (\Om \Uba) = I_{\ba}$ using sketch-orthonormality of $\Uba$, thus $W$ is orthonormal. 
    Using the Rayleigh quotient expression of $\Tba$ from \cref{eq:rayleighquo-rKD} results in
    $$\Tba = (\Om \Uba)^T \Om \A \Uba = W^T (\Om \Vba)^T \Om \A \Vba W = W^T \Hba.W. $$
    This shows that $\Tba$ and $\Hba$ are similar, hence they discard and retain the same Ritz values in their respective reordering processes. We note $Q$ and $R$ the unitary similarity transforms for the reordering processes of rIRA and rKS, respectively. By definition, $\Tba \coloneqq R \tTba R^T$ and thus $R(e_1,\dots,e_\sa)$ spans the Schur vectors of $\Tba$ associated to the retained Ritz values. As stated in \cite[Theorem 3.1]{Stewart2002KrylovSchurAlgorithm}, $Q(e_1,\dots,e_\sa)$ spans the Schur vectors of $\Hba$ associated to the same Ritz values, and given that $\Tba = W^T \Hba W$ and that these eigenspaces are simple, we have that $W^T Q(e_1,\dots,e_\sa)$ has the same span as $R(e_1,\dots,e_\sa)$. Thus there exists a non-singular matrix $Z$ such that $R(e_1,\dots,e_\sa) = W^T Q(e_1,\dots,e_\sa) Z$. Ultimately, using $\tUba \coloneqq \Uba R$ and $\tVba \coloneqq \Vba Q $ gives
    $$\tUba(e_1,\dots,e_\sa) = \Uba R(e_1,\dots,e_\sa)  = \Vba W R(e_1,\dots,e_\sa)  = \Vba W W^T Q(e_1,\dots,e_\sa) Z = \tVba(e_1,\dots,e_\sa) Z,$$
    which shows $\Span{\tVba(e_1,\dots,e_\sa)} = \Span{\tUba(e_1,\dots,e_\sa)}$.
\end{proof}
This equivalence allows results from rIRA to be transferred to rKS. In particular, rIRA (and thus rKS) acts as a polynomial filtering method on the eigenvalue problem, modifying the starting vector of the Krylov subspace by $\uinitj = \psi(A) \uinit$ at each restart, where $\psi(A)$ is a polynomial that damps unwanted eigen-directions in $\A$, see \cite{Lehoucq1995DeflationTechniquesImplicitly,Damas2024RandomizedImplicitlyRestarted} for more details.

\subsection{A structure-preserving re-orthogonalization process}
The critical step of the rKS method is the orthogonalization process, whose stability is required to obtain accurate Ritz eigenpairs. It occurs during the randomized Arnoldi method (see \cref{alg:line:extend-facto} of \cref{alg:rKS}), and more precisely at the projection step (\cref{alg:line:lowprojstep} of \cref{alg:extend-rKS}). Stability is measured by how well-conditioned the sketched basis $S$ is, and consequently $U$. Ideally, $S$ is exactly orthonormal so that $\kappa(S) = 1$, which by \cref{cor:embeddsvals} gives $\kappa(U) = O(1+\epsilon)$. However, in finite precision arithmetic, algorithms such as Gram-Schmidt suffer from a loss of orthogonality, where $\kappa(S)$ grows slowly and can eventually become large; see \cite{Giraud2005lossorthogonalityGram}.

We introduce an approach to re-orthogonalize a sketch-orthonormal basis. It relies on the \emph{whitening} strategy from \cite{Nakatsukasa2024FastAccurateRandomized}, which is related to the sketch and precondition approach from \cite{Rokhlinfastrandomizedalgorithm2008}. Consider a sketch-orthonormal Krylov decomposition $\A \U = \U \Bsa + \ulast \blast\tsp$ as in \cref{def:sketchorthKD}. Let $\Ssa$ be the sketch of $\U$. The idea is to compute a stable QR factorization of $\Ssa$ using Householder transformations and obtain $\Ssa = QR$. The stability of Householder QR is discussed in \cite{Higham2002Accuracystabilitynumerical}, where it is shown that $\kappa(Q) \leq \sa^{3/2} \dsk \mathbf{u}$, with $\mathbf{u}$ the unit round-off. This is sufficiently small and independent of the large dimension $n$. We can then use $R$ as a non-singular similarity transformation for the Krylov decomposition,
\begin{equation}
    \A \U R^{-1} = \U  R^{-1} R \Bsa R^{-1} + \ulast \blast\tsp R^{-1} \implies \A \tU = \tU \tBsa + \ulast \tblast\tsp.
\end{equation}
This is a sketch-orthonormal Krylov decomposition. The new basis $\tU = \U R^{-1}$ has sketch $\tSsa = \Om \tU = \Om \U R^{-1} = \Ssa R^{-1} = Q$, and is thus orthonormal up to machine precision.

This method is inexpensive, as the QR factorization of $\Ssa \in \Rnm{\dsk}{\sa}$ is affordable due to the small sketch dimension, and the update $\tU = \U R^{-1}$ can be performed via backward triangular solve, since $R$ is upper-triangular. Hence, this re-orthogonalization process can be easily integrated in a Krylov decomposition. Moreover, it preserves the randomized Arnoldi structure. Consider $\A V_\sa = V_\sa \Hsa + \beta v_{\sa+1} e_\sa^T$ as in \cref{def:rArno}. The same procedure applies. With $\Ssa = \Om V_\sa$, compute $\Ssa = QR$ and obtain
\begin{equation}
    \A  V_\sa R^{-1} =  V_\sa R^{-1} R \Hsa R^{-1} + \beta v_{\sa+1} e_\sa^T R^{-1}.
\end{equation}
Note that $R^{-1}$ is upper-triangular, so its last row is of the form $(0, 0, \dots, 0 , \alpha)$, giving $e_\sa^T R^{-1} = \alpha e_\sa^T$. Defining $\tilde{\beta} = \alpha \beta$, we obtain a randomized Arnoldi factorization with a better conditioned Krylov basis.  This shows that, subject to a stable QR factorization of the Krylov basis sketch, we can re-orthogonalize the full basis in the Krylov decomposition or Arnoldi framework while preserving the structure in both cases.

\section{Ritz pairs convergence and associated deflation technique}
\label{sec:deflation}
This section explains how to monitor the convergence of Ritz pairs in the rKS method, and describes a deflation procedure for converged pairs.

\subsection{Monitoring the residual errors}
\label{sec:errormonitoring}
This subsection focuses on computing the relative residual error $\norm{\A \rivec - \rival \rivec} / \norm{\A \rivec}$ for an approximate eigenpair $(\rival, \rivec)$ obtained at \cref{alg:line:rksreserrors} in \cref{alg:rKS}. Note that $\rivec$ is typically not of unit norm, since $\rivec = \U \yrivec$ and $\U$ is sketch-orthonormal rather than $\ell_2$-orthonormal. However, the relative residual error is invariant under normalization by $\norm{\rivec}$, so normalization can be ignored.

The starting point is the strategy used in \rArno{} in \cite{Nakatsukasa2024FastAccurateRandomized}, \cite{Damas2024RandomizedImplicitlyRestarted}. Given a \rArno{} factorization $\A \V = \V \Hsa + \beta v_{\sa+1} e_\sa^T$ and a Ritz pair $(\rival,\rivec = \V \yrivec)$, it holds that 
\begin{equation}
    \label{eq:rArno-reserror}
    \sqrt{\frac{1}{1+\epsilon}} \frac{\beta \abs{e_\sa^T\yrivec}}{\norm{\A \rivec}} 
    \leq \frac{\norm{\A \rivec - \rival \rivec}}{\norm{\A \rivec}} 
    \leq \sqrt{\frac{1}{1-\epsilon}} \frac{\beta \abs{e_\sa^T\yrivec}}{\norm{\A \rivec}}.
\end{equation}
This is obtained by multiplying the \rArno{} relation \cref{eq:rArno} by an eigenvector $\yrivec$ of the Hessenberg matrix. Now, consider a \rKS{} factorization from \cref{def:rKS}
\begin{equation}
\label{eq:rKS-for-monitoring}
    \A \U = \U \Tsa + \ulast \blast\tsp.
\end{equation}
If $(\rival,\rivec = \U \yrivec)$ is a Ritz pair, i.e.\@, $(\rival, \yrivec)$ is an eigenpair of $\Tsa$, then multiplying \cref{eq:rKS-for-monitoring} by $\yrivec$ gives
\begin{equation}
    \label{eq:RitzError}
    \A \rivec - \rival \rivec = \ulast \blast\tsp \yrivec.
\end{equation}
The relative error can be obtained by scaling the error by $\norm{\A \rivec}$. While this is feasible, it is costly in practice, as it requires a matrix-vector product with $\A$ and the computation of $\U \yrivec$ for each Ritz pair. The approximation $\norm{\A \rivec} \approx \norm{\rival \rivec} = \abs{\rival} \norm{\rivec}$ can be used instead, which is most accurate near convergence. Consequently, the residual error can be computed as
\begin{equation}
    \label{eq:exact_rks_reserror}
    \frac{\norm{\A \rivec - \rival \rivec}}{\norm{\A \rivec}} = \frac{\norm{\ulast} \abs{\blast\tsp \yrivec}}{\norm{\A \rivec}} \approx \frac{ \norm{\ulast}}{ \norm{\rivec}} \frac{ \abs{\blast\tsp \yrivec}}{ \abs{\rival}}. 
\end{equation}
Still, \cref{eq:exact_rks_reserror} requires computing two norms in $\Rn{n}$, which may be expensive. Instead, the $\epsilon$-embedding property can be used to derive a bound similar to \cref{eq:rArno-reserror} for rKS. Using
\begin{align}
    \sqrt{\frac{1}{1+\epsilon}} \norm{\Om (\A \rivec - \rival \rivec)} & \leq \norm{\A \rivec - \rival \rivec} \leq \sqrt{\frac{1}{1-\epsilon}} \norm{\Om (\A \rivec - \rival \rivec)}  \\
    \sqrt{1-\epsilon} \frac{1}{\norm{\Om (\A \rivec)}}& \leq \frac{1}{\norm{\A \rivec}}   \leq \sqrt{1+\epsilon} \frac{1 }{\norm{\Om (\A \rivec)}}   
\end{align}
from \cref{eq:epsembedd} and
\begin{align}
    \norm{\Om (\A \rivec - \rival \rivec)} & = \norm{\Om \ulast \blast\tsp \yrivec} = \abs{\blast\tsp \yrivec} & \text{since the sketch $\ulast$ is of unit norm} \\
    \norm{\Om (\A \rivec)} &  \approx \norm{ \Om(\rival \U \yrivec)} = \abs{\rival} & \text{since $\U$ is sketch-orthonormal and $\yrivec$ is of unit norm}
\end{align}
we obtain the bound
\begin{equation}
    \label{eq:rks_reserror_bound}
        \sqrt{\frac{1-\epsilon}{1+\epsilon}} \frac{\abs{\blast\tsp \yrivec}}{\abs{\rival}} 
        \lessapprox 
        \frac{\norm{\A \rivec - \rival \rivec}}{\norm{\A \rivec}}  
        \lessapprox 
        \sqrt{\frac{1+\epsilon}{1-\epsilon}}  \frac{\abs{\blast\tsp \yrivec}}{\abs{\rival} } .
\end{equation}
The main advantage of the bound in \cref{eq:rks_reserror_bound} is that the quantity $\abs{\blast\tsp \yrivec} / \abs{\rival}$ is directly available at negligible cost during the rKS algorithm, allowing for a simple estimate of the relative residual error up to a small distortion factor $ \sqrt{\frac{1 \mp \epsilon}{1 \pm \epsilon}}$.
\begin{remark}
    If one prefers not to use the approximation $\norm{\A \rivec} \approx \norm{\rival \rivec}$, the following exact bound can be used:
    \begin{equation}
        \sqrt{\frac{1-\epsilon}{1+\epsilon}} \abs{\blast\tsp \yrivec} \leq \frac{\norm{\A \rivec - \rival \rivec}}{\norm{\rivec}}  \leq \sqrt{\frac{1+\epsilon}{1-\epsilon}} \abs{\blast\tsp \yrivec},
    \end{equation}
    which is obtained similarly. It uses the normalized Ritz vector $\rivec / \norm{\rivec}$, but it is not relative to the eigenvalue $\rival$.
\end{remark}

\subsection{Single vector deflation}
\label{sec:singlevecdef}
We now address deflation of converged eigenpairs. If a Ritz pair $(\rival, \rivec)$ has a small residual error, we want it to be excluded from further iterations. In rKS, deflation is closely related to partial sketch-orthonormal Schur factorizations of $\A$, introduced below. Let $\A Q = Q T$ be the real Schur decomposition of $\A$ as in \cref{prop:realSchur}. For any $k \in \{1,\dots,n\}$, it holds that
\begin{equation}
   \A q_\sa = \eival_\sa q_\sa + \sum_{i=1}^{\sa-1} t_{i,k} q_i.
\end{equation}
Thus, $ \Span{q_1,\dots,q_\sa} $ forms an invariant subspace of $ \A $. This allows for the construction of a $\sa$-partial Schur factorization $ AQ_\sa = Q_\sa \Tsa $, where $ Q_\sa \in \Rnm{n}{\sa} $ is orthonormal, and $ \Tsa \in \Rnm{\sa}{\sa} $ is block upper triangular. For any subset of eigenvalues $ \{\eival_1,\dots,\eival_\sa \} $ of $\A$ closed under complex conjugation, this factorization can be achieved by taking $Q_\sa$ as the first $\sa$ Schur vectors from \cref{eq:realSchur}, with these vectors chosen to place $ \{\eival_1,\dots,\eival_\sa \} $ in the top-left section of the $ n \times n $ upper triangular matrix $T$. The columns of $Q_\sa$ can then be sketch-orthonormalized using, for example, the RGS method, yielding $Q_\sa = \U R$ with $\U$ sketch-orthonormal. Assuming no zero eigenvalues among $\{\eival_1,\dots,\eival_\sa \}$, $R$ is non-singular. Then
\begin{equation}
    \label{eq:existence_of_sketchschur}
    \A Q_\sa = Q_\sa \Tsa \implies \A \U = \U (R \Tsa R^{-1}) = \U \tTsa,
\end{equation}
where $\tTsa$ is similar to $\Tsa$, so they share the same eigenvalues. Moreover, $\tTsa$ is upper-triangular as a product of upper-triangular matrices. The existence of the decomposition in \cref{eq:existence_of_sketchschur} motivates the following definition.
\begin{definition}
    \label{def:kpartialsketchSchur}
    A $\sa$-partial sketch-orthonormal real Schur factorization of $ \A $ is defined as
    \begin{equation}
      \A \U = \U \Tsa,
    \end{equation}
    where $ \U \in \Rnm{n}{\sa} $ is sketch-orthonormal and $ \Tsa $ is block upper triangular, with blocks of dimension up to $ 2 \times 2 $. The columns of $ \U $ are called sketch-orthonormal Schur vectors of $ \A $, in analogy to Schur vectors. They can be chosen so that the eigenvalues of $ \Tsa$ are any subset $ \{\eival_1,\dots,\eival_\sa \} $ of eigenvalues of $\A$.
\end{definition}
We now establish the connection between partial sketch-orthonormal Schur factorizations of $\A$ and converged Ritz pairs in rKS. Assume the following \rKS{} factorization holds,
\begin{equation}
    \label{eq:deflatedAu1U2}
    \A (u_1 \; U_2) = (u_1 \; U_2)
    \begin{pmatrix}
        \rival_1 & T_{12} \\
        0 & T_{22}
    \end{pmatrix}
    + u (b_1\tsp \; b_2\tsp),
\end{equation}
where $b_1$ is a scalar. Given the upper-triangular structure of $T$, this means that
\begin{equation}
    \frac{\norm{\A u_1 - \rival_1 u_1}}{\norm{u_1}} = \frac{\abs{b_1}}{\norm{u_1}}.
\end{equation}
In other words, if $b_1 = 0$, the normalized first column of $U$ acts as an eigenvector and a sketch-orthonormal Schur vector for $\A$, with corresponding eigenvalue $\rival_1$ (the top-left entry of $T$). In practice, we require $\abs{b_1}$ to be smaller than a given tolerance $\eta$, and the error incurred by setting it to zero is discussed in \cref{sec:perturbed_problem}. For now, we assume $b_1 = 0$, allowing the algorithm to continue with the deflated decomposition,
\begin{equation}
    \label{eq:deflatedAU2}
    \A U_2 = U_2 T_{22} + u b_2\tsp.
\end{equation}
It is important to note that, for this new decomposition to hold and to find other eigenvalues of $\A$, each new vector produced during the expansion step must be sketch-orthogonalized against $u_1$. That is, \cref{alg:line:multAextend} of \cref{alg:extend-rKS} becomes
\begin{equation}
    w_j =  A u_j - u_1 (\Om u_1)^\dag \Om A u_j,
\end{equation}
so that $w_j \Omperp u_1$. Note that $(\Om u_1)^\dag = (\Om u_1)^T$. This naturally leads to the deflated operator $A_1 \coloneqq (I - u_1 (\Om u_1)^T \Om)A$, so the newly computed vector $w_j$ in the Krylov subspace satisfies $w_j = A_1 u_j$. We denote $P_{u_1}^\Om \coloneqq u_1 (\Om u_1)^\dag \Om$ as the oblique projector on $\Span{u_1}$, with $A_1 = (I-P_{u_1}^\Om)A$. We show that $A_1$ shares the same sketch-orthonormal Schur vectors as $\A$.
\begin{proposition}
    \label{prop:A1def}
    Let $\A$ be a square matrix and consider a $\sa$-partial sketch-orthonormal real Schur factorization $\A \U = \U \Tsa$ corresponding to the subset of eigenvalues $(\eival_1, \dots, \eival_\sa)$, with $u_1 = \U e_1$ and $t_{11} = \lambda_1$. Note $P_{u_1}^\Om \coloneqq u_1 (\Om u_1)^\dag \Om$ the oblique projector onto $\Span{u_1}$. Then the operator 
    \begin{equation}
      A_1 \coloneqq (I - P_{u_1}^\Om)A  
    \end{equation}
    shares the same $\sa$-partial sketch-orthonormal Schur vectors with $\A$, and has the subset of eigenvalues $(0, \eival_2,\dots,\eival_\sa)$.  Moreover, the eigenvectors $\tilde{u}_i$ of $A_1$ are sketch-orthogonal to $u_1$, the subspace $\Span{u_1, \tilde{u}_2, \dots, \tilde{u}_\sa}$ is invariant under $\A$  and a pair $(u_1, \tilde{u}_i)$ forms a pair of sketch-orthonormal Schur vectors for $\A$ for all $i \in [2,\sa]$.
\end{proposition}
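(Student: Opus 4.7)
The plan is to exploit the block upper triangular structure of the partial Schur factorization together with the elementary identity $(\Om u_1)^T \Om U = e_1^T$ coming from the sketch-orthonormality of $U$, so that the action of the oblique projector $P_{u_1}^\Om$ on the Schur basis reduces to a simple column extraction.

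First, I would unpack the partial sketch-orthonormal Schur factorization. Since $\Tsa$ is block upper triangular with $t_{11} = \eival_1$, the first column of $\Tsa$ is $\eival_1 e_1$, giving $\A u_1 = \eival_1 u_1$. Applying $(I - P_{u_1}^\Om)$ and using $P_{u_1}^\Om u_1 = u_1 (\Om u_1)^T \Om u_1 = u_1$ (since $u_1$ is sketch-unit), I get $A_1 u_1 = 0$, establishing that $u_1$ is an eigenvector of $A_1$ associated with the eigenvalue $0$.

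Next, I would compute $A_1 U$ directly. Sketch-orthonormality of $U$ yields $(\Om u_1)^T \Om U = e_1^T$, so $P_{u_1}^\Om U = u_1 e_1^T$, which gives $(I - P_{u_1}^\Om) U = U - u_1 e_1^T$, i.e.\ the matrix obtained from $U$ by zeroing its first column. Therefore
\begin{equation*}
A_1 U = (I - P_{u_1}^\Om) \U \Tsa = (U - u_1 e_1^T) \Tsa = U \Tsa - u_1 (e_1^T \Tsa) = U \bigl( \Tsa - e_1 e_1^T \Tsa \bigr),
\end{equation*}
where the last equality uses $u_1 = U e_1$. The matrix $\tTsa \coloneqq \Tsa - e_1 e_1^T \Tsa$ is $\Tsa$ with its first row set to zero: it is still block upper triangular, its $(1,1)$ entry is $0$, and its trailing $(\sa-1)\times(\sa-1)$ block coincides with the trailing block of $\Tsa$. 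This proves simultaneously that $\U$ forms a $\sa$-partial sketch-orthonormal Schur basis for $A_1$ and that the eigenvalues of $A_1$ restricted to this invariant subspace are $(0, \eival_2,\dots,\eival_\sa)$.

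To finish, I would argue the three remaining claims. For the sketch-orthogonality of the eigenvectors $\tilde{u}_i$ of $A_1$ to $u_1$: since $A_1 = (I - P_{u_1}^\Om) A$, every vector in $\range(A_1)$ is sketch-orthogonal to $u_1$, because $(\Om u_1)^T \Om (I - P_{u_1}^\Om) v = (\Om u_1)^T \Om v - (\Om u_1)^T \Om u_1 (\Om u_1)^T \Om v = 0$; any eigenvector $\tilde{u}_i$ with $\eival_i \neq 0$ satisfies $\tilde{u}_i = \tfrac{1}{\eival_i} A_1 \tilde{u}_i \in \range(A_1)$, hence $\Om \tilde{u}_i \perp \Om u_1$. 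For invariance of $\Span{u_1,\tilde{u}_2,\dots,\tilde{u}_\sa}$ under $\A$, I would write $\A \tilde{u}_i = A_1 \tilde{u}_i + P_{u_1}^\Om \A \tilde{u}_i = \eival_i \tilde{u}_i + \alpha_i u_1$ with $\alpha_i = (\Om u_1)^T \Om \A \tilde{u}_i \in \mathbb{R}$, which together with $\A u_1 = \eival_1 u_1$ gives the claim. Finally, this same identity yields
\begin{equation*}
\A (u_1 \; \tilde{u}_i) = (u_1 \; \tilde{u}_i) \begin{pmatrix} \eival_1 & \alpha_i \\ 0 & \eival_i \end{pmatrix},
\end{equation*}
which is block upper triangular, and since $u_1$ and $\tilde{u}_i$ are sketch-unit and mutually sketch-orthogonal, $(u_1, \tilde{u}_i)$ is indeed a pair of sketch-orthonormal Schur vectors for $\A$.

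The only real subtlety I anticipate is bookkeeping for the case when $\eival_1$ (or some $\eival_i$) comes in a complex conjugate $2\times 2$ block; the proposition as stated treats $t_{11} = \eival_1$ as a scalar, so I would note that the same argument extends by replacing the rank-one projector $P_{u_1}^\Om$ with the projector onto the corresponding $2$-dimensional invariant piece, but beyond this remark the argument is essentially just the linear-algebraic manipulation above.
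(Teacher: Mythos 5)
Your proof is correct and follows essentially the same route as the paper: compute $A_1\U$ using the identity $(\Om u_1)^T\Om\U = e_1^T$ to reduce the oblique projector to a column extraction, then verify sketch-orthogonality of eigenvectors of $A_1$ to $u_1$ and invariance of the span directly. In fact your identification of $\tTsa = \Tsa - e_1 e_1^T \Tsa$ as $\Tsa$ with its entire first row zeroed is more precise than the paper's, which writes only the $(1,1)$ entry as being subtracted (a harmless slip since the eigenvalues and the Schur structure are unaffected), and your closing remark on the $2\times 2$ conjugate-block case is a welcome addition the paper does not spell out.
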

\begin{proof}
    We have the following computations
    \begin{align*}
        A_1 \U & = (I - u_1 (\Om u_1)^T \Om)A \U = (I - u_1 (\Om u_1)^T \Om) \U \Tsa \\
        & = \U \Tsa - u_1 (\Om u_1)^T (\Om \U) \Tsa =  \U \Tsa - u_1 e_1^T \Tsa \\
        & = \U \Tsa - \U e_1 e_1^T \Tsa = \U (\Tsa - \begin{pmatrix}
            \lambda_1 & 0 \\
            0 & 0 
        \end{pmatrix}) \\
        & = \U \tTsa
    \end{align*}
    where $\tTsa$ equals $\Tsa$ except it has a zero on its top-left entry. The decomposition $A_1 \U = \U \tTsa$ is a $\sa$-partial sketch-orthonormal real Schur factorization for $A_1$, with the same sketch-orthonormal Schur vectors $\U$ as $\A$. Now, let $(\tilde{x}_i,\lambda_i)$ be an eigenpair for $A_1$ with $\lambda_i \neq 0$. Then
    \begin{align*}
        A_1 \tilde{x}_i & =  (I - u_1 (\Om u_1)^T \Om) \A \tilde{x}_i  = \A \tilde{x}_i - u_1 (\Om u_1)^T \Om \A \tilde{x}_i = \A \tilde{x}_i  - u_1 \alpha,
    \end{align*}
    with $\alpha \coloneqq (\Om u_1)^T \Om \A \tilde{x}_i$. Since $A_1 \tilde{x}_i = \lambda_i \tilde{x}_i$ by definition, this gives $\A \tilde{x}_i  = \lambda_i \tilde{x}_i + \alpha u_1 $,  such that $\A \tilde{x}_i \in \Span{u_1,\tilde{x}_i}$. In addition, $\tilde{x}_i$ and $u_1$ are sketch-orthogonal:
    \begin{align*}
        (\Om u_1)^T \Om  \tilde{x}_i & = \frac{1}{\lambda_i} (\Om u_1)^T (\Om \A \tilde{x}_i - \Om u_1 (\Om u_1)^T \Om \A \tilde{x}_i) \\
        & = \frac{1}{\lambda_i}  ((\Om u_1)^T \Om \A \tilde{x}_i - (\Om u_1)^T \Om \A \tilde{x}_i) = 0
    \end{align*}
    using $ (\Om u_1)^T  (\Om u_1) = 1$. As such, $(u_1,\tilde{x}_i)$ forms a pair of sketch-orthonormal Schur vectors for $\A$, and the subspace $\Span{u_1, \tilde{u}_2, \dots, \tilde{u}_\sa}$ is invariant under $\A$.
\end{proof}
This shows that the single vector deflation procedure preserves the sketch-orthonormal Schur vectors of $\A$. Finally, we show that implicitly using the deflated operator $A_1$ by explicitly sketch-orthogonalizing new vectors against $u_1$ justifies the deflated decomposition in \cref{eq:deflatedAU2}. Left-multiplying \cref{eq:deflatedAu1U2} by $(I -P_{u_1}^\Om)$ yields
\begin{equation*}
    (I -P_{u_1}^\Om) \A (u_1 \; U_2) = (I -P_{u_1}^\Om) (u_1 \; U_2)
    \begin{pmatrix}
        \rival_1 & T_{12} \\
        0 & T_{22}
    \end{pmatrix}
    + (I -P_{u_1}^\Om) u (b_1\tsp \; b_2\tsp).
\end{equation*}
The terms reduce as follows, assuming $\abs{b_1} = 0$ and thus $\A u_1 = \lambda_1 u_1$:
\begin{itemize}
    \item $(I -P_{u_1}^\Om) \A u_1 = \lambda_1 u_1 - \lambda_1 u_1 = 0$, since the projection of $u_1$ by $P_{u_1}^\Om$ is $u_1$ itself.
    \item $(I -P_{u_1}^\Om) \A U_2 = A_1 U_2$.
    \item $(I -P_{u_1}^\Om) u_1 = 0$.
    \item $(I -P_{u_1}^\Om) U_2 = U_2 -  u_1 (\Om u_1)^T \Om U_2 = U_2$, since $u_1$ is sketch-orthogonal to $U_2$ by construction.
    \item $(I -P_{u_1}^\Om) u = u -  u_1 (\Om u_1)^T \Om u = u $, since $u$ is sketch-orthogonal to $u_1$ by construction.
\end{itemize}
This results in the decomposition
\begin{equation}
    A_1 (0 \; U_2) = (0 \; U_2)
    \begin{pmatrix}
        \rival_1 & T_{12} \\
        0 & T_{22}
    \end{pmatrix}
    + u (0 \; b_2\tsp) \implies A_1 U_2 = U_2 T_{22} + u b_2\tsp.
\end{equation}
The single deflation procedure then consists of proceeding implicitly with the last equation, as proposed in \cref{eq:deflatedAU2}. This strategy is often called \emph{locking} in the context of restarted Arnoldi methods; see \cite{Lehoucq1998ARPACKusersguide,Lehoucq1995DeflationTechniquesImplicitly,saad2011numerical,Stewart2002KrylovSchurAlgorithm,Kressner2005NumericalMethodsGeneral} for more details in the deterministic setting.

\subsection{Subspace deflation}
We now address the deflation of multiple converged vectors and their associated subspace, generalizing \cref{sec:singlevecdef}. Suppose the single vector $u_1$ has been deflated as described above, and now
\begin{equation}
    \label{eq:deflatedA1u2U3}
    A_1 (u_2 \; U_3) = (u_2 \; U_3)
    \begin{pmatrix}
        \rival_2 & T_{23} \\
        0 & T_{33}
    \end{pmatrix}
    + u (b_2\tsp \; b_3\tsp).
\end{equation}
If $\abs{b_2} = 0$, the single vector deflation procedure can be applied, resulting in the operator
\begin{equation}
    A_2 \coloneqq (I - P_{u_2}^\Om) A_1 = (I - P_{u_2}^\Om) (I - P_{u_1}^\Om) \A.
\end{equation}
In this situation, by \cref{prop:A1def}, $(u_1, u_2)$ forms a pair of sketch-orthonormal Schur vectors for $\A$, since $u_2$ is an eigenvector of $A_1$. We could then lock $u_2$ as for $u_1$ and work with $A_2 U_3 = U_3 T_{33} + u b_3\tsp$. This would deflate the subspace $\Span{u_1,u_2}$, corresponding to the eigenspace $\Span{\eivec_1,\eivec_2}$, where $\eivec_1,\eivec_2$ are eigenvectors of $\A$ associated with the simple eigenvalues $\lambda_1,\lambda_2$.

An important property is that $(I - P_{u_2}^\Om) (I - P_{u_1}^\Om) = (I - P_{U_{12}}^\Om)$, where $U_{12} \coloneqq (u_1 \; u_2)$. Indeed,
\begin{align}
    (I - P_{u_2}^\Om) (I - P_{u_1}^\Om) = I - P_{u_2}^\Om - P_{u_1}^\Om + P_{u_2}^\Om P_{u_1}^\Om = I - P_{u_2}^\Om - P_{u_1}^\Om 
\end{align}
since $u_1 \Omperp u_2$, so $P_{u_2}^\Om P_{u_1}^\Om = 0$, and:
\begin{align}
    (I - P_{U_{12}}^\Om) & = I - (u_1 \; u_2) (\Om (u_1 \; u_2))^T \Om  =  I - (u_1 \; u_2)
    \begin{pmatrix}
        (\Om u_1)^T \Om \\
        (\Om u_2)^T \Om
    \end{pmatrix} \nonumber \\
    & = I - u_1 (\Om u_1)^T \Om - u_2 (\Om u_2)^T \Om =  I - P_{u_2}^\Om - P_{u_1}^\Om.
\end{align}
This can be generalized to $q$ vectors $(u_1, \dots, u_q)$, showing that deflating many vectors sequentially is equivalent to deflating them all at once. The following result summarizes the deflation strategy for rKS.
\begin{theorem}
    \label{th:exactSubspaceDeflation}
    Let $\A$ be a square matrix and assume a \rKS{} factorization $\A \U = \U \Tsa + \ulast \blast^*$ holds, as in \cref{def:rKS}. Let $q$ be an integer smaller than $\sa$ and partition the decomposition as 
    $    A (U_q \; \tilde{U}) = (U_q \; \tilde{U})
    \begin{pmatrix}
        T_{qq} & * \\
        0 & \tilde{T}
    \end{pmatrix}
    + u (b_q\tsp \; \tilde{b}\tsp)$, where the spectrum of $T_{qq}$ is $(\lambda_1,\dots,\lambda_q)$. 
    If the first $q$ components of $\blast$ are zeros, i.e.\@\@ $b_q\tsp = (0, \dots, 0)$, then the sketch-orthonormal set of vectors $U_q$ can be deflated using the decomposition
    \begin{equation}
        \label{eq:AqSchurDeflatedSubspace}
        (I - P_{U_q}^\Om) A \tilde{U} = \tilde{U} \tilde{T} + u \tilde{b}\tsp
    \end{equation}
    where $P_{U_q}^\Om \coloneqq U_q (\Om U_q)^\dag \Om$ is an oblique projector onto $\Span{U_q}$. Moreover
    \begin{itemize} 
        \item $(\lambda_1,\dots,\lambda_q)$ are eigenvalues of $\A$ and columns of $U_q$ are sketch-orthonormal Schur vectors of $\A$ spanning the eigenspace associated to these eigenvalues.
        \item Let $(\lambda_1,\dots,\lambda_q, \lambda_{q+1}, \dots, \lambda_\sa)$ by any subset of eigenvalues of $\A$ containing $(\lambda_1,\dots,\lambda_q)$. This subset has an associated $\sa$-partial sketch-orthonormal real Schur factorization for $\A$. Then, the operator $A_q \coloneqq (I - P_{U_q}^\Om) A$ shares the same sketch-orthonormal Schur vectors as in that factorization, only now they are associated to eigenvalues $(0,\dots,0, \lambda_{q+1}, \dots, \lambda_\sa)$. 
        \item Reciprocally, if $U_{r} \in \Rnm{n}{r}$ are $r \; (\leq k-q)$ sketch-orthonormal Schur vectors for $A_q$ associated with nonzero eigenvalues, then the columns of $(U_q \; U_r)$ are sketch-orthonormal Schur vectors for $\A$, and as such they are invariant under $\A$. 
    \end{itemize}
\end{theorem}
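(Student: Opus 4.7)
The plan is to extend the single-vector deflation argument of \cref{prop:A1def} to a block of $q$ sketch-orthonormal vectors. The core algebraic fact underlying everything is that, since $\Om U_q$ is orthonormal, the oblique projector $P_{U_q}^\Om = U_q (\Om U_q)^T \Om$ acts as the identity on $\Span{U_q}$ and as zero on any vector sketch-orthogonal to $U_q$. I would first establish the main deflated relation \cref{eq:AqSchurDeflatedSubspace} by left-multiplying the partitioned decomposition by $(I - P_{U_q}^\Om)$, using three facts: $(I - P_{U_q}^\Om) U_q = 0$, and both $(I - P_{U_q}^\Om)\tilde{U} = \tilde{U}$ and $(I - P_{U_q}^\Om)\ulast = \ulast$, since $\tilde{U}$ and $\ulast$ are sketch-orthogonal to $U_q$ by construction of the rKS factorization. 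The hypothesis $b_q = 0$ kills the $U_q$-column of the source term and leaves exactly \cref{eq:AqSchurDeflatedSubspace}.

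For the first bullet, the hypothesis $b_q = 0$ combined with the partitioned decomposition immediately yields $A U_q = U_q T_{qq}$, which is a $q$-partial sketch-orthonormal real Schur factorization in the sense of \cref{def:kpartialsketchSchur}; hence $\Span{U_q}$ is $A$-invariant and the spectrum of $T_{qq}$ is a subset of the spectrum of $A$. For the second bullet, I would take any extended $\sa$-partial sketch-orthonormal Schur factorization $A U_\sa = U_\sa T_\sa$ whose first $q$ columns coincide with $U_q$ (such an extension exists by \cref{def:kpartialsketchSchur}). Using that $U_\sa$ is sketch-orthonormal gives $(\Om U_q)^T \Om U_\sa = (I_q \; \; 0)$, so a short computation parallel to the proof of \cref{prop:A1def} yields $A_q U_\sa = U_\sa \tilde{T}_\sa$, where $\tilde{T}_\sa$ is block upper triangular with diagonal $(0,\dots,0,\lambda_{q+1},\dots,\lambda_\sa)$. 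This is the required partial Schur factorization for $A_q$ and shows that $A$ and $A_q$ share the same sketch-orthonormal Schur vectors in $U_\sa$.

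The third bullet is the one needing the most care. Assume $A_q U_r = U_r T_r$ with $T_r$ having only nonzero eigenvalues, hence invertible. The first task is to show that $U_r$ is sketch-orthogonal to $U_q$: since $A_q = (I - P_{U_q}^\Om) A$, the columns of $A_q U_r = U_r T_r$ lie in $\range(I - P_{U_q}^\Om) = \{y : y \Omperp U_q\}$, and invertibility of $T_r$ transfers this property from $U_r T_r$ to $U_r$ itself. With this in hand I would decompose
\begin{equation*}
A U_r = A_q U_r + P_{U_q}^\Om A U_r = U_r T_r + U_q C, \qquad C := (\Om U_q)^T \Om A U_r,
\end{equation*}
and combine with $A U_q = U_q T_{qq}$ from the first bullet to obtain the block upper triangular factorization $A(U_q \; U_r) = (U_q \; U_r) \begin{pmatrix} T_{qq} & C \\ 0 & T_r \end{pmatrix}$. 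Sketch-orthonormality of $(U_q \; U_r)$ follows from sketch-orthonormality of each block together with the sketch-orthogonality just established. The main obstacle I anticipate is precisely this sketch-orthogonality step in the converse direction: it fails without the no-zero-eigenvalue hypothesis on $T_r$ (otherwise an eigenvector for eigenvalue $0$ could in principle have a component along $U_q$), and it requires the careful characterization of $\range(I - P_{U_q}^\Om)$ together with the invertibility of $T_r$.
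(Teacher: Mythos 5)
Your proposal is correct and follows essentially the same approach as the paper's proof: truncation for the first bullet, computation on an extended $\sa$-partial sketch-orthonormal Schur factorization for the second, and the sketch-orthogonality of $U_r$ to $U_q$ via invertibility of $T_r$ for the third. The only cosmetic difference is that you characterize the sketch-orthogonality step through $\range(I - P_{U_q}^\Om)$, while the paper carries out the equivalent algebraic computation $(\Om U_q)^T(\Om U_r) = (\Om U_q)^T \Om A_q U_r T_r^{-1} = 0$ directly.
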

\begin{proof}
First, the fact that $b_q$ has all zero components directly gives $A U_q = U_q T_{qq}$ by truncating the partition of the \rKS{} decomposition to its first $q$ columns. Thus, columns of $U_q$ are sketch-orthonormal vectors for $\A$ associated with the eigenvalues $(\eival_1, \dots, \eival_q)$ by definition of a sketch-orthonormal Schur factorization of length $q$.

Next, let $(\lambda_{q+1},\dots,\lambda_{\sa})$ be any $k-q$ eigenvalues of $\A$ with $U_{k-q}$ the associated sketch-orthonormal Schur vectors, i.e.\@, $\A U_{k-q} = U_{k-q} T_{k-q,k-q}$. Then, $\A (U_q \; U_{k-q}) = (U_q \; U_{k-q})T = UT$, where $U \coloneqq (U_q \; U_{k-q})$ and $T \coloneqq \begin{pmatrix}
    T_{qq} & * \\
    0 & T_{k-q,k-q}
\end{pmatrix}$. Similarly to the proof of \cref{prop:A1def}, it holds that:
\begin{align*}
    \A_q U & = (I- U_q(\Om U_q)^\dag \Om) A U = (I- U_q(\Om U_q)^\dag \Om) U T = UT - U_q(\Om U_q)^\dag \Om U T \\
    & = UT - U_q(\Om U_q)^T \Om (U_q \; U_{k-q}) T = UT - U_q(I_q \; 0)T = UT - U \begin{pmatrix}
        I_q \\
        0
    \end{pmatrix}(I_q \; 0)T \\ 
    & = UT - \begin{pmatrix}
    I_q & 0 \\
    0 & 0
\end{pmatrix} \begin{pmatrix}
    T_{qq} & * \\
    0 & T_{k-q,k-q}
\end{pmatrix} 
 = U(T - \begin{pmatrix}
    T_{qq} & * \\
    0 & 0
\end{pmatrix} )= U \begin{pmatrix}
    0 & 0 \\
    0 & T_{k-q,k-q}
\end{pmatrix}.
\end{align*}
As such, columns of $U$ are sketch-orthonormal Schur vectors for $\A_q$, associated with eigenvalues $(0,\dots,0,\eival_{q+1},\dots,\eival_\sa)$. The goal of zeroing converged eigenvalues of $\A$ in $\A_q$ is thus achieved.

Finally, suppose $\A_q U_r = U_r T_r$, i.e.\@, we have found $r \leq k-q$ converged sketch-orthonormal Schur vectors for $\A_q$ associated with nonzero eigenvalues, so $T_r$ is non-singular. We show that $U \coloneqq (U_q \; U_r)$ is invariant under $\A$. First, $\A_q U_r = U_r T_r \implies \A U_r = U_r + P_{U_q}^\Om \A U_r \in \Span{U_r \; U_q}$. Since $U_q$ is already invariant under $\A$, it follows that $\A (U_q \; U_r) \in \Span{U_q \; U_r}$. Now, $U_r$ and $U_q$ are sketch-orthogonal:
\begin{align*}
    (\Om U_q)^T (\Om U_r)&  = (\Om U_q)^T (\Om \A_q U_r T_r^{-1}) = (\Om U_q)^T (\Om (I - U_q (\Om U_q)^T \Om)) \A U_r T_r^{-1} \\
    & = (\Om U_q)^T (\Om  - \Om  U_q (\Om U_q)^T \Om)) \A U_r T_r^{-1} = ((\Om U_q)^T \Om - (\Om U_q)^T \Om) \A U_r T_r^{-1} = 0,
\end{align*}
using sketch-orthonormality of $U_q$. Consequently, columns of $U = (U_q \; U_r)$ are sketch-orthonormal Schur vectors of $\A$.
\end{proof}

This theorem shows that sketch-orthonormal Schur vectors of $\A$ can be deflated whenever their associated error is zero, and as a result, their corresponding converged eigenvalues will not be part of subsequent computations. These eigenvalues are now set to zero in $A_q$. Moreover, any sketch-orthonormal Schur vectors that later converge for $A_q$ are still sketch-orthonormal Schur vectors for $\A$. This reduces the number of targeted eigenpairs each time locking occurs, and may enhance convergence by changing the gap ratios between eigenvalues (e.g., if $\lambda_1$ was the largest modulus and now $\lambda_2$ is). Note that any \rKS{} factorization such as \cref{eq:deflatedAu1U2} can be reordered to test different vectors $u_{q+1}$ and their associated error $b_{q+1}$, a strategy known as \emph{aggressive deflation} in \cite{Kressner2005NumericalMethodsGeneral}. In practice, as in single-vector deflation, working with $A_q$ is done by sketch-orthogonalizing $A u_j$ against $U_q$ in \cref{alg:line:multAextend} of \cref{alg:extend-rKS}:
\begin{equation}
    w_j =  A u_j - U_q (\Om U_q)^\dag \Om A u_j.
\end{equation}

\subsection{The perturbed deflated problem}
\label{sec:perturbed_problem}
In practice, it is not feasible to require the components of $b$ to be exactly zero. Instead, a convergence criterion $\eta$ is chosen so that deflation occurs when the first $q$ entries of $\abs{b}$ are smaller than $\eta$. We present a bound for the resulting error on a Ritz pair using this criterion. Consider a partitioned \rKS{} factorization of size $\sa$,
\begin{equation}
    \label{eq:AUqU}
    \A (U_q \; \tilde{U}) = (U_q \; \tilde{U})
    \begin{pmatrix}
        T_{qq} & * \\
        0 & \tilde{T}
    \end{pmatrix}
    + \ulast (b\tsp \; \tilde{b}\tsp).
\end{equation}
We assume all $q$ components of $b$ are smaller in absolute value than $\eta$, i.e.\@, $\abs{b_i} \leq \eta$ for $i = 1,\dots,q$, so $\norm{b} \leq \sqrt{q} \eta$. If we take an eigenpair of $T_{qq}$ satisfying $T_{qq} \yrivec = \rival \yrivec$, then from \cref{eq:RitzError},
\begin{equation*}
    \A (U_q \yrivec) = \rival (U_q \yrivec) + \ulast b\tsp \yrivec
\end{equation*}
such that
\begin{equation}
    \label{eq:etaControlRitz}
    \norm{\A (U_q \yrivec) - \rival (U_q \yrivec)} = \norm{\ulast b\tsp \yrivec} \leq \frac{\sqrt{q}}{\sqrt{1-\epsilon}} \eta
\end{equation}
using the $\epsilon$-embedding property for $\norm{\ulast}$, the Cauchy–Schwarz inequality for $\abs{b\tsp \yrivec} \leq \norm{b} \norm{\yrivec}$, and $\norm{\yrivec} = 1$ as an eigenvector. This shows that the threshold $\eta$ for $\abs{b}$ actually controls the error on the Ritz pairs associated with the Ritz values of $T_{qq}$, since the factor $\frac{\sqrt{q}}{\sqrt{1-\epsilon}}$ is $O(1)$ in practice. The process of controlling Ritz pair errors through the vector $b_q$ is mentioned as a more restrictive convergence criterion based on Schur vectors in the deterministic case in \cite[Chapter 3]{Kressner2005NumericalMethodsGeneral}.
\begin{remark}
    \Cref{eq:etaControlRitz} can be slightly improved by noting that for an upper-triangular matrix $T_{qq}$ with diagonal $(\rival_1,\dots,\rival_q)$, the eigenvector $\yrivec_i$ associated with $\rival_i$ has only its first $i$ components nonzero. Thus $b\tsp \yrivec_i = \sum_{j=1}^{i} b_j \yrivec_{i,j}$, yielding $\abs{b\tsp \yrivec_i} \leq \eta \sqrt{i}$ for $i = 1,\dots,q$, instead of $\abs{b\tsp \yrivec_i} \leq \eta \sqrt{q}$ for all $i$. This matters only if a very large number $\sa$ of eigenpairs is sought, since $q$ can be large.
\end{remark}

We now consider the backward stability of zeroing components of $b$ when they are less than a threshold $\eta$ during rKS. We show that this is equivalent to continuing the \rKS{} algorithm on a slightly perturbed operator $\tilde{A} = A + E$. Starting from \cref{eq:AUqU}:
\begin{align*}
    \A \U & = \U T + \ulast (0 \; \tilde{b}\tsp) +   \ulast (b\tsp \;0) \\
    \A \U -  \ulast (b\tsp \;0)  & = \U T + \ulast (0 \; \tilde{b}\tsp) \\
    \A \U -  \ulast (b\tsp \;0) (\Om \U)^T \Om \U  & = \U T + \ulast (0 \; \tilde{b}\tsp) \; \text{ using $(\Om \U)^T \Om \U = I_\sa$} \\
    (\A -  \ulast (b\tsp \;0) (\Om \U)^T \Om) \U & = \U T + \ulast (0 \; \tilde{b}\tsp) \\
    (\A -  \ulast (b\tsp \;0) 
    \begin{pmatrix}
        (\Om U_q)^T \\
        (\Om \tilde{U})^T
    \end{pmatrix}
    \Om) \U & = \U T + \ulast (0 \; \tilde{b}\tsp) \\
     (\A -  \ulast b\tsp (\Om U_q)^T \Om) \U & = \U T + \ulast (0 \; \tilde{b}\tsp), 
\end{align*}
that is,
\begin{equation}
    \label{eq:perturbed_problem}
    (\A +  E) \U  = \U T + \ulast (0 \; \tilde{b}\tsp) \; \text{ where } \; E \coloneqq -   \ulast b\tsp (\Om U_q)^T \Om.
\end{equation}
We now address the question of the norm of the error $E$ as a perturbation of $\A$. We have, in both Frobenius and spectral norms,
\begin{align}
    \label{eq:errorE}
    \norm{E} & = \norm{ \ulast b\tsp (\Om U_q)^T \Om} \leq \norm{\ulast b\tsp} \norm{(\Om U_q)^T} \norm{\Om} \leq \norm{\ulast} \norm{b\tsp}  \norm{\Om}
\end{align}
since $\ulast b\tsp$ is a rank-1 matrix and $\Om U_q$ is unitary by construction. We know that $\norm{\ulast} \leq 1 / \sqrt{1-\epsilon}$ and $\norm{b\tsp} \leq \eta \sqrt{q}$. However, the factor $\norm{\Om}$ cannot be tightly bounded in general. If the entries of $\Om$ are standard normal, then by \cite[Proposition 10.1]{HalkoFindingStructureRandomness2011}, $\norm{\Om} \leq \sqrt{n} + \sqrt{d}$, where $d$ is the sketching size. To efficiently bound $\norm{\Om}$, recall that we are working within an iterative Krylov procedure, applying $\A$ (or $\tilde{\A}$) only to a small subspace $\K_\sa \subset \Rn{n}$. This is highlighted in any \rKS{} factorization, notably $ \tilde{A} \U  = \U T + \ulast (0 \; \tilde{b}\tsp) $, where $\tilde{A} $ is not considered on its own but applied to $\U$ spanning $\K_\sa$. In other words, the two equations 
\begin{equation}
    \tilde{A} \U  = \U T + \ulast (0 \; \tilde{b}\tsp) \; \text{ and } \; \tilde{A}_{|\K_\sa} \U  = \U T + \ulast (0 \; \tilde{b}\tsp)
\end{equation}
are equivalent, where $\tilde{A}_{|\K_\sa}$ is the restriction of $\tilde{A}$ to $\K_\sa$. In this context, it is relevant to consider the error $E$ restricted to $\K_\sa$, i.e.\@, $E_{|\K_\sa} = - \ulast b\tsp (\Om U_q)^T \Om_{|\K_\sa}$. For any $x \in \K_\sa$ with $\norm{x} = 1$, $\norm{\Om x} \leq \sqrt{1+\epsilon}$ by the $\epsilon$-embedding property. Thus,
\begin{equation}
    \label{eq:tight_error_OmK}
    \norm{\Om_{|\K_\sa}}_2 = \max_{x \in \K_\sa, \norm{x} = 1} \norm{\Om x} \leq \sqrt{1+\epsilon},
\end{equation}
which is a much tighter bound in this situation than $\sqrt{n} + \sqrt{\dsk}$.

We summarize the above discussion on practical subspace deflation in the following theorem.
\begin{theorem}[Backward error of the subspace deflation]
    Assume a partitioned \rKS{} factorization 
    $
    \A (U_q \; \tilde{U}) = (U_q \; \tilde{U})
    \begin{pmatrix}
        T_{qq} & * \\
        0 & \tilde{T}
    \end{pmatrix}
    + \ulast (b\tsp \; \tilde{b}\tsp).$ Then the subspace $U_q$ can be deflated by considering $b$ to be $0$. If each component of $b$ is smaller than a threshold $\eta$ in absolute value, then any Ritz pair $(\rival, \rivec = U_q \yrivec)$ coming from an eigenpair $(\rival,\yrivec)$ of $T_{qq}$ satisfies the error bound
    \begin{equation}
        \norm{\A \rivec - \rival \rivec} \leq \frac{\sqrt{q}}{\sqrt{1-\epsilon}} \eta.
    \end{equation}
    Moreover, there exists a set of unit norm vectors $Z \in \Rnm{n}{q}$ with $\Span{Z} = \Span{U_q}$ and a diagonal matrix $D \in \Rnm{q}{q}$ such that
    \begin{equation}
        \enorm{\A Z - Z D} \leq q \sqrt{\frac{1+\epsilon}{1-\epsilon}} \eta \quad \text{and} \quad \fnorm{\A Z - Z D} \leq q^{3/2} \sqrt{\frac{1+\epsilon}{1-\epsilon}} \eta.
    \end{equation}
    Besides, the \rKS{} method can be continued on the smaller, perturbed problem 
    \begin{equation}
        (I - P_{U_q}^\Om) (A + E)\tilde{U} = \tilde{U} \tilde{T} + \ulast \tilde{b}\tsp \; \text{ where } \; \norm{E_{|\K}}_{F,2} \leq \sqrt{q} \sqrt{\frac{1+\epsilon}{1-\epsilon}} \eta 
    \end{equation}
    which satisfies the properties of the exact subspace deflation from \cref{th:exactSubspaceDeflation}.
\end{theorem}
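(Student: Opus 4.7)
The plan is to split the proof into three pieces corresponding to the three conclusions, all starting from the partitioned \rKS{} factorization and reusing algebraic manipulations already performed in the text.

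First, for the Ritz pair bound, I would simply invoke the argument already sketched around \cref{eq:etaControlRitz}: take an eigenpair $(\rival, \yrivec)$ of $T_{qq}$ with $\norm{\yrivec}=1$, right-multiply the partitioned \rKS{} factorization by $\begin{pmatrix}\yrivec\\0\end{pmatrix}$ to obtain $\A \rivec - \rival \rivec = \ulast\, b\tsp \yrivec$, and then bound the right-hand side using $\norm{\ulast} \leq 1/\sqrt{1-\epsilon}$ from the $\epsilon$-embedding property of $\Om$ on $\K_{\sa+1}$, together with Cauchy--Schwarz and $\norm{b} \leq \sqrt{q}\eta$.

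Second, to exhibit $Z$ and $D$, I would compute the eigendecomposition of $T_{qq}$, writing $T_{qq} Y = Y D$ with $D$ diagonal and with unit-norm eigenvector columns $\yrivec_i$ (working in the complex field if $T_{qq}$ has $2\times 2$ blocks, or equivalently with a real block-diagonal $D$ if needed), and then set $z_i \coloneqq U_q \yrivec_i / \norm{U_q \yrivec_i}$. The key intermediate estimate is $\norm{U_q \yrivec_i} \geq 1/\sqrt{1+\epsilon}$, which follows from the $\epsilon$-embedding property combined with the fact that $\Om U_q$ is orthonormal so that $\norm{\Om U_q \yrivec_i}=\norm{\yrivec_i}=1$. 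Multiplying the partitioned factorization by $\yrivec_i$ and dividing by $\norm{U_q\yrivec_i}$ yields $\A z_i - \rival_i z_i = \ulast\, (b\tsp\yrivec_i)/\norm{U_q\yrivec_i}$, whose norm is controlled by $\sqrt{(1+\epsilon)/(1-\epsilon)}\,\sqrt{q}\,\eta$. Since $AZ-ZD$ is a matrix with $q$ such columns (and has rank one, being of the form $\ulast v\tsp$), standard inequalities $\enorm{M} \le \sqrt{q}\,\max_i \norm{m_i}$ and $\fnorm{M} \le \sqrt{q}\,\max_i \norm{m_i} \cdot \sqrt{q}$ deliver the stated bounds with factors $q$ and $q^{3/2}$ respectively.

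Third, for the backward-error statement, I would take the identity \cref{eq:perturbed_problem} already established, namely $(\A+E)U = UT + \ulast(0\;\tilde b\tsp)$ with $E = -\ulast b\tsp (\Om U_q)\tsp \Om$, then apply $(I-P_{U_q}^\Om)$ on the left and truncate to the last $\sa-q$ columns. The four terms reduce cleanly using $(I-P_{U_q}^\Om)U_q = 0$, $(I-P_{U_q}^\Om)\tilde U = \tilde U$ (sketch-orthogonality of $\tilde U$ to $U_q$) and $(I-P_{U_q}^\Om)\ulast = \ulast$ (sketch-orthogonality of $\ulast$ to $U_q$), producing the stated deflated equation. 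For the norm bound, I would use the rank-one factorization $E_{|\K} = -\ulast\,[(\Om U_q)b]\tsp \Om_{|\K}$ and combine $\norm{\ulast}\leq 1/\sqrt{1-\epsilon}$, the unitarity of $\Om U_q$, $\norm{b}\leq\sqrt{q}\,\eta$, and the sharp bound $\enorm{\Om_{|\K}} \leq \sqrt{1+\epsilon}$ from \cref{eq:tight_error_OmK}; since $E_{|\K}$ is rank one, the spectral and Frobenius norms coincide, giving the claimed $\sqrt{q}\sqrt{(1+\epsilon)/(1-\epsilon)}\,\eta$ bound. The main subtlety is the careful handling of the restriction $E_{|\K}$ (rather than $E$ itself, which could have a much larger norm through $\Om$), and to a lesser extent the dimensional bookkeeping that produces the clean $q$ vs $q^{3/2}$ constants via column-wise bounds rather than the tighter rank-one bounds.
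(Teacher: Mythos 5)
Your proposal is correct and follows essentially the same route as the paper: multiply the partitioned factorization by eigenvectors of $T_{qq}$ to expose the rank-one residual $\ulast b\tsp(\cdot)$, normalize via the $\epsilon$-embedding bound $1/\norm{U_q\yrivec_i}\leq\sqrt{1+\epsilon}$ (the paper packages this as a diagonal scaling $\Sigma^{-1}$ rather than column by column, but they are the same estimate), and for the backward-error part reuse \cref{eq:perturbed_problem} together with $(I-P_{U_q}^\Om)$, the sketch-orthogonality cancellations, the rank-one structure, and the tight bound $\enorm{\Om_{|\K}}\leq\sqrt{1+\epsilon}$ of \cref{eq:tight_error_OmK}. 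One small remark: the Frobenius inequality you quote, $\fnorm{M}\le q\max_i\norm{m_i}$, is an extra factor of $\sqrt{q}$ looser than the standard $\fnorm{M}\le\sqrt{q}\max_i\norm{m_i}$; it still yields the stated $q^{3/2}$ bound, and in fact both you and the paper are being deliberately generous here, since the rank-one structure would already give $q$ rather than $q^{3/2}$.
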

\begin{proof}
    The first bound on a Ritz pair comes from \cref{eq:etaControlRitz}.
    To prove the existence of $Z$ and $D$, write the eigendecomposition of $T_{qq}$ as $T_{qq} W = W D$ with $D$ diagonal. Define $X \coloneqq U_q W$, so
    $\A X - X D = \ulast b\tsp W.$
    To normalize the columns $x_i$ of $X$, right-multiply by the scaling diagonal matrix $\Sigma^{-1} = \diag(\frac{1}{\norm{x_1}}, \dots, \frac{1}{\norm{x_q}})$ and set $Z \coloneqq X \Sigma^{-1}$. Note that $D$ and $\Sigma^{-1}$ commute as diagonal matrices. Thus,
    $$\A Z - Z D = \ulast b\tsp W \Sigma^{-1}.$$
    To bound the right-hand side, use the $\epsilon$-embedding property \cref{eq:epsembedd} to get $\sqrt{1-\epsilon} \leq 1 / \norm{x_i} \leq \sqrt{1+\epsilon}$ since $\norm{\Om x_i} = \norm{\Om U_q w_i} = 1$ (as $\Om U_q$ is unitary and $w_i$ is unit norm). Therefore, $\enorm{\Sigma^{-1}} \leq \sqrt{1+\epsilon}$ and $\fnorm{\Sigma^{-1}} \leq \sqrt{q}  \sqrt{1+\epsilon} $. Since $W$ is a set of $q$ unit norm eigenvectors, $\fnorm{W} = \sqrt{q}$ and $\enorm{W} \leq \sqrt{q}$. Using $\norm{\ulast b\tsp} \leq \frac{\sqrt{q}}{\sqrt{1-\epsilon}} \eta$ from \cref{eq:etaControlRitz}, for $\A Z - Z D = \ulast b\tsp W \Sigma^{-1}$:
    $$\enorm{\A Z - Z D} \leq q \sqrt{\frac{1+\epsilon}{1-\epsilon}} \eta, \quad \fnorm{\A Z - Z D} \leq q^{3/2} \sqrt{\frac{1+\epsilon}{1-\epsilon}} \eta.$$
    The existence of the perturbed problem follows from the discussion above. In the theorem, we add the projection $ (I - P_{U_q}^\Om)$ to restrict \cref{eq:perturbed_problem} to $\tilde{U}$. The bound on $\norm{E_{|\K}}_{F,2}$ comes from \cref{eq:errorE}, $\norm{\ulast b\tsp} \leq \frac{\sqrt{q}}{\sqrt{1-\epsilon}} \eta$, and \cref{eq:tight_error_OmK}.
\end{proof}

\section{Numerical experiments}
\label{sec:numericals}

In this section, we demonstrate the numerical efficiency of the \rKS{} algorithm. All experiments are performed using Julia version 1.10 \cite{Bezanson2017Juliafreshapproach}, running on a computational node with 2x Cascade Lake Intel Xeon 5218 CPUs (16 cores, 2.4GHz) and 192GB RAM. For randomized embeddings, we use sparse-sign matrices with sketching size $\dsk = 2m$, where $m$ is the Krylov dimension, and each column of $\Om \in \Rnm{\dsk}{n}$ contains $\zeta = 8$ normalized random signs $\frac{\pm 1}{\sqrt{\dsk}}$. Sparse-sign embeddings are described in \cite{MartinssonRandomizednumericallinear2020}.

We compare the \rKS{} algorithm to two deterministic eigensolvers: Implicitly Restarted Arnoldi (IRA) and Krylov-Schur (KS). IRA is Julia's native sparse eigensolver, called via \emph{eigs}, and serves as our reference. KS is our own implementation, essentially \cref{alg:rKS} without sketch orthogonalization, using classical Gram-Schmidt with reorthogonalization (CGS2) for stability. Using CGS or modified Gram-Schmidt (MGS) for deterministic KS resulted in poor performance due to loss of orthogonality in the Krylov basis. For rKS, we use Randomized Gram-Schmidt (RGS) \cite{BalabanovRandomizedGramSchmidt2022} to sketch-orthogonalize the Krylov basis. RGS, with stability similar to MGS, was sufficient in all our tests. As noted in \cref{sec:prelim}, RGS requires half the flops of CGS, and thus a quarter of the flops of CGS2. However, this rough flop count does not include computational constants in the small dimension $m$ (e.g., $O(k^3)$), among other costs. In practice, we expect a speedup factor of 2 to 4, meaning rKS should be 2 to 4 times faster than IRA/KS in our sequential setup.

\subsection{Computational speedup of randomized Krylov Schur with increasing input dimension}
This subsection investigates how rKS scales as the input size $n$ increases for $\A \in \Rnm{n}{n}$. We seek $\sa = 40$ eigenvalues of either largest modulus (LM) or smallest modulus (SM), using a Krylov dimension $m = 2k = 80$. The input matrices are synthetic, non-symmetric, and tri-diagonal. Diagonal entries are specified in \cref{tab:SynthTestMatrices}, while sub-diagonal entries are Gaussian:
\begin{equation}
    \label{eq:synthMathSubdiags}
    \quad \A_{i+1,i} = \frac{g_{i+}}{100}, \; \A_{i-1,i} = \frac{g_{i-}}{100},
\end{equation}
where $g_{i \pm}$ are drawn from $\mathcal{N}(0,1)$. These Gaussian sub-diagonals act as noise for the main spectrum on the diagonal. The input dimension $n$ increases from $n_{min} = 10^5$ to $n_{max} = 5 \times 10^6$.

\begin{table}
        \caption{Test matrices for \cref{fig:synth_timings,fig:synth_iterations}. Here, $i \in \mathcal{I}$ where $\mathcal{I}$ is the set of $n$ equispaced points from $2$ to $10$. These matrices are tridiagonal, with non-diagonal elements being Gaussian noise specified in \cref{eq:synthMathSubdiags}.}
        \label{tab:SynthTestMatrices}
    \begin{center}
    \begin{tabular}{|c|c|c|c|c|}
        \hline
        \rule{0pt}{3ex}  \textbf{Type of spectrum} & Exponential &  Logarithmic &  Harmonic roots & Geometric decay   \\[1ex]
        \hline
        \rule{0pt}{3ex}  \textbf{Diagonal entries} & $\exp(i/10)$ & $\log(i +1)$  & $1 + 1/i^2$ & $0.99^i$ \\[1ex]
        \hline
    \end{tabular}
\end{center}
\end{table}

The execution times for eight different configurations are shown in \cref{fig:synth_timings}, and the corresponding number of restart iterations for convergence is shown in \cref{fig:synth_iterations}. The last experiment, "Exponential SM, 10 iterations" (bottom right), is a special case where the number of iterations is fixed at 10, which is insufficient for convergence. It serves as a reference for the speedup provided by rKS for the same number of iterations across all methods. In other scenarios, all methods converge, but typically in a different number of iterations. Convergence is declared when all residual errors $\abs{\blast\tsp \yrivec_i} / \abs{\rival_i}$ from \cref{eq:rks_reserror_bound} are below the threshold $\eta = 10^{-10}$ for each pair $(\yrivec_i, \rival_i)$, $i = 1,\dots,\sa$. Discrepancies in iteration counts may be due to the artificial structure of these matrices. In the next subsection (\cref{sec:scaling_krylov}) on real-world examples, this effect is reduced, as seen in \cref{fig:mosaic_iterations}.

We observe two main properties of rKS. First, it is always the fastest method among the three eigensolvers. Since \cref{fig:synth_timings} uses a logarithmic scale, the fact that rKS is consistently below IRA/KS indicates a multiplicative speedup, ranging from 2 to 3, consistent with our expectations. This speedup is clearly due to the use of randomized orthogonalization, as shown by the fixed-iteration experiment, where rKS outperforms the other two. Second, rKS often requires fewer iterations to converge than IRA or KS. It remains an open question to understand whether randomization enhances stability for the eigenvalue problem, in the sense of reducing the number of iterations required for convergence. 

\Cref{fig:res_vp} addresses the quality of the Ritz eigenvalues obtained as approximations to the true eigenvalues. It shows the decrease in residual error over iterations for KS and rKS for runs with $n = 5 \times 10^6$ from \cref{fig:synth_timings}, as well as the real parts of the computed eigenvalues. The maximum residual error among the sought eigenpairs is shown at each iteration, i.e.\@, the maximum of $\abs{\blast\tsp \yrivec_i} / \abs{\rival_i}$ from \cref{eq:rks_reserror_bound} over $i = 1,\dots,\sa$. We focus on the real parts, since the eigenvalues are mainly real, with small imaginary parts due to Gaussian noise on the subdiagonals. KS and rKS perform similarly in terms of error, with rKS sometimes reaching tolerance a few iterations earlier.  We note that our randomized method tends to introduce convergence spikes. However, we emphasize that, on our test matrices, rKS is never less accurate than its deterministic counterpart. The eigenvalues found are the same for all three methods, which is an advantage for rKS since it computes them faster. Importantly, randomization does not reduce accuracy in the target eigenvalues, as discussed at the end of \cref{sec:KSasRR}.

\begin{figure}
    \centering
    \includegraphics{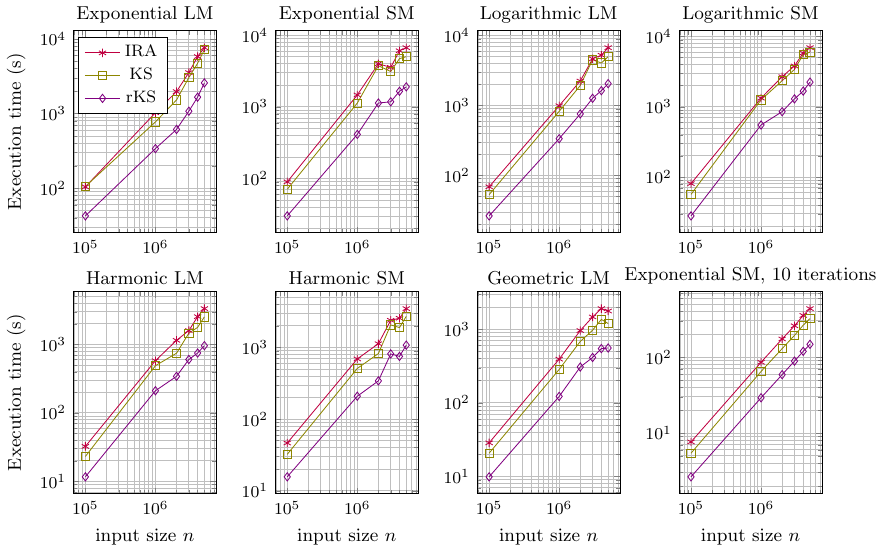}
    \caption{Execution time of IRA, KS and rKS on synthetic matrices from \cref{tab:SynthTestMatrices}. For each data point except on "Exponential SM, 10 iterations", all methods have converged and found $\sa = 40$ eigenpairs of either largest modulus (LM) or smallest modulus (SM) in a Krylov dimension $m = 2\sa = 80$. The input size $n$ increases from $10^5$ to $5.10^6$. Speedup is always obtained by using rKS, up to 3x faster. On "Exponential SM, 10 iterations", all methods performed 10 iterations at each data point, without convergence. It serves as a benchmark for speedup thanks to randomization, all things considered.}
    \label{fig:synth_timings}
\end{figure}
\begin{figure}
    \centering
    \includegraphics{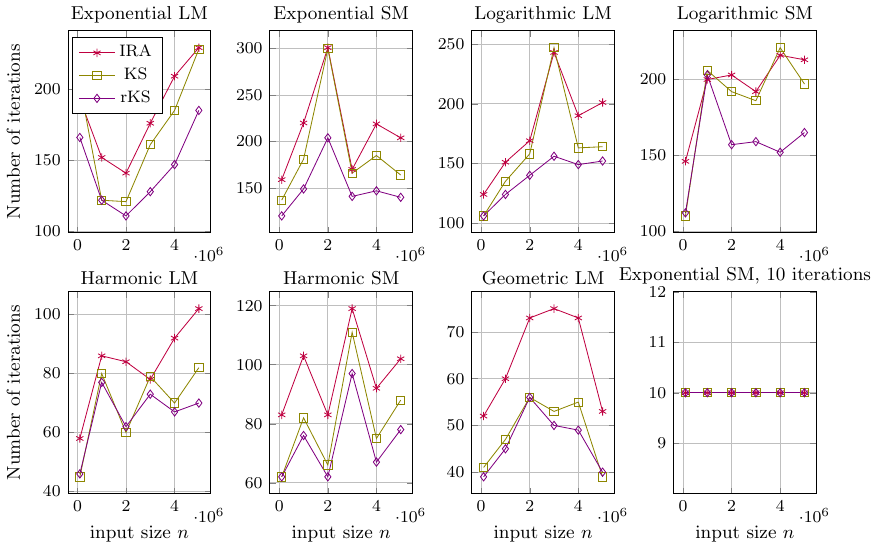}
    \caption{Corresponding number of iterations from experiments of \cref{fig:synth_timings} for each method to converge and find $\sa = 40$ eigenpairs of either largest modulus (LM) or smallest modulus (SM) in a Krylov dimension $m = 2\sa = 80$. The discrepancy in these numbers of iterations might be a consequence of the synthetic structure of the inputs matrices, as it is not observed for real case matrices in \cref{fig:mosaic_iterations}. However, rKS always perform the less number of iterations, a possible consequence of stability induced by randomization.}
    \label{fig:synth_iterations}
\end{figure}
\begin{figure}
    \centering
    \includegraphics{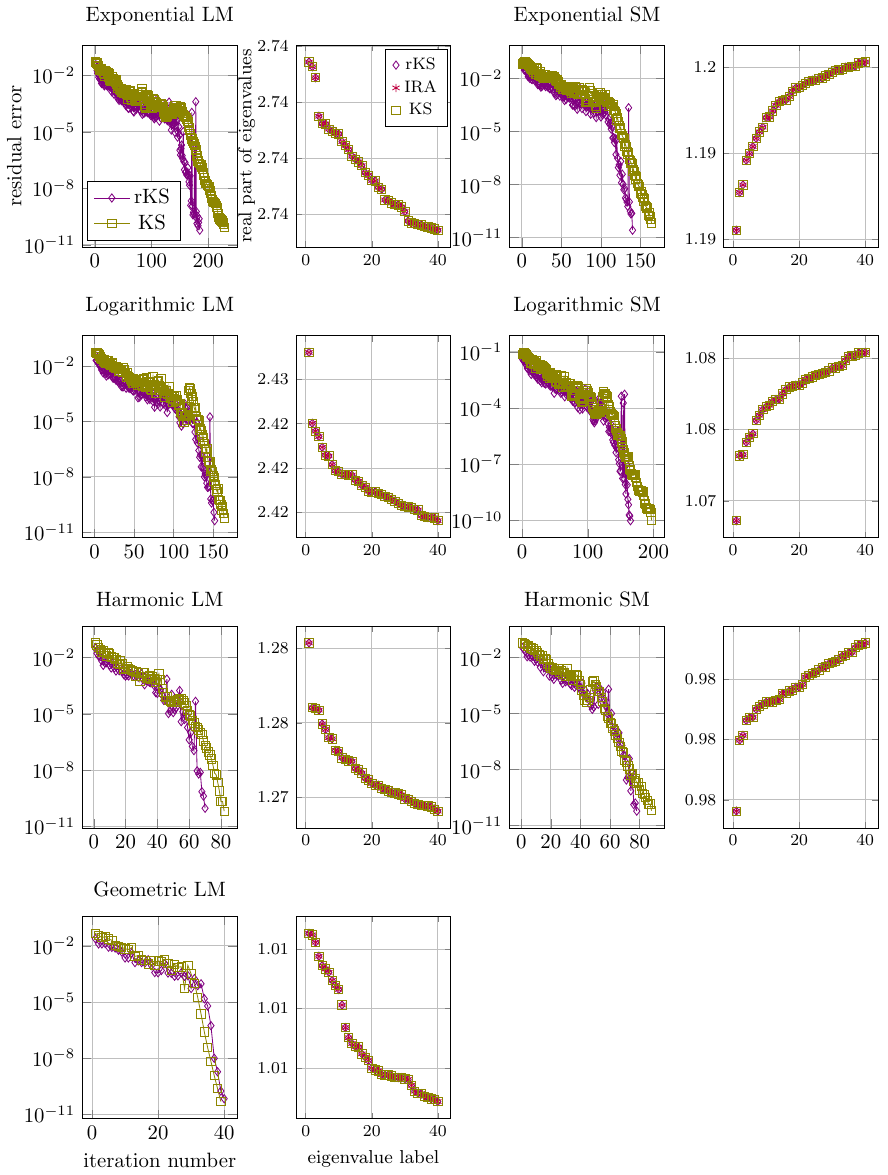}
    \caption{Maximum residual errors (left) alongside real parts of the eigenvalues computed (right), for the eight experiments where $n = 5.10^6$ from \cref{fig:synth_timings} using matrices from \cref{tab:SynthTestMatrices}. The residual errors are computed with the term $\abs{\blast\tsp \yrivec} / \abs{\rival}$ from \cref{eq:rks_reserror_bound}. As such, they are equal to $\frac{\norm{\A \rivec - \rival \rivec}}{\norm{\A \rivec}}$ or close to it up to a factor of $\bigO(\eps)$, for KS and rKS, respectively. They are not available for the IRA function. On the right of each residual errors plots, the real parts of the eigenvalues found are displayed to show that all three methods find the same eigenvalues. The imaginary parts were usually small and do not bring information, since the syntetic matrices have large diagonal values compared to their subdiagonal Gaussian noise. Randomization brings spikes in the errors but also tends to converge earlier.}
    \label{fig:res_vp}
\end{figure}
\begin{remark}
    In these experiments, we do not display the Geometric SM case in order to show the fixed number of iterations scenario, but we add here that the results for it were similar to all the previous configurations. 
\end{remark}

\subsection{Acceleration benefits of randomized Krylov Schur for varying Krylov dimension}
\label{sec:scaling_krylov}
In the next experiment, we compare rKS, IRA, and KS as the Krylov dimension $m$ increases for a fixed number of target eigenpairs $\sa$. We test the methods on matrices from various applications, taken from the SuiteSparse Matrix Collection \cite{Davis2011universityFloridasparse}; see \cref{tab:testMatrices} for details.

We seek $\sa = 50$ eigenpairs, varying the Krylov dimension from $m_{min}=75$ to $m_{max}=200$. Most cases target eigenpairs of largest modulus (LM), with one instance of smallest modulus (SM). The convergence threshold is set to $\eta = 10^{-10}$, and a maximum of 300 outer iterations (restarts) is allowed. The exception is the bottom right experiment, "Stokes SM, 10 iterations", where each method is limited to 10 restarts, serving as a benchmark for scaling in $m$. Results are shown in \cref{fig:mosaic_timings} (execution time) and \cref{fig:mosaic_iterations} (restart iterations to convergence). Methods are considered converged if the number of restarts is less than 300, which is almost always the case, except for a few instances at $m = 75$. Recall that $m = \ba$, so $\sa = 50$ and $m = 25$ leaves $p= 25$. Since $p$ is the number of shifts (discarded directions) at each restart, it must be large enough for fast convergence.

From \cref{fig:mosaic_timings}, rKS is clearly the fastest in all experiments. The speedup varies with problem size and Krylov dimension, but is typically 2 to 4 times faster, as expected. Moreover, rKS is less sensitive to the ratio $m/\sa$, often set to 2 by empirically choosing $m = 2\sa$. This property arises from sketch orthogonalization, which mitigates the challenge of selecting the total Krylov dimension $m$ relative to the target $\sa$; see \cite{ShahzadehFazeli2015keychoosesubspace} for discussion in the deterministic setting. The impact of $m/\sa$ on iteration count is best seen in \cref{fig:mosaic_iterations}, which highlights the trade-off between a small Krylov subspace (reducing orthogonalization cost) and a large one (reducing restarts). All three methods perform a similar number of iterations, especially for $m \geq 2\sa$, and the discrepancy in iteration counts for synthetic matrices in \cref{fig:synth_iterations} should not be generalized. However, rKS always performs the fewest iterations when there is a difference, reinforcing observations from previous experiments.

Finally, the bottom right experiment "Stokes SM, 10 iterations" shows the speedup obtained by rKS when the number of restarts is fixed but $m$ increases. This plot is not in logarithmic scale, to contrast with \cref{fig:synth_timings} and provide a different visualization. The speedup is still about 2 to 3 times faster than the deterministic methods.

\begin{table}
        \caption{Test matrices for \cref{fig:mosaic_timings} from the Suite Sparse Matrix Collection. They are all non symmetric except for Ga41As41H72.}
        \label{tab:testMatrices}
    \begin{center}
    \begin{tabular}{|l|l|l|l|}
        \hline
        \textbf{Name} & \textbf{Size n} & \textbf{Nonzeros} & \textbf{Problem} \\
        \hline
        Ga41As41H72 & $268,096$ & $18,488,476$ & Theoretical/Quantum Chemistry Problem \\
        Atmosmodl & $1,489,752$ & $10,319,760$ & Computational Fluid Dynamics \\
        ML\_Geer & $1,504,002$ & $110,686,677$ & Structural Problem  \\
        SS & $1,652,680$ & $34,753,577$ & 	Semiconductor Process Problem \\
        Vas\_stokes\_4M & $	4,382,246$ & $131,577,616$ & Semiconductor Process Problem \\
        Stokes & $11,449,533$ & $349,321,980$ & Semiconductor Process Problem
        \\
        \hline
    \end{tabular}
\end{center}
\end{table}

\begin{figure}
    \centering
    \includegraphics{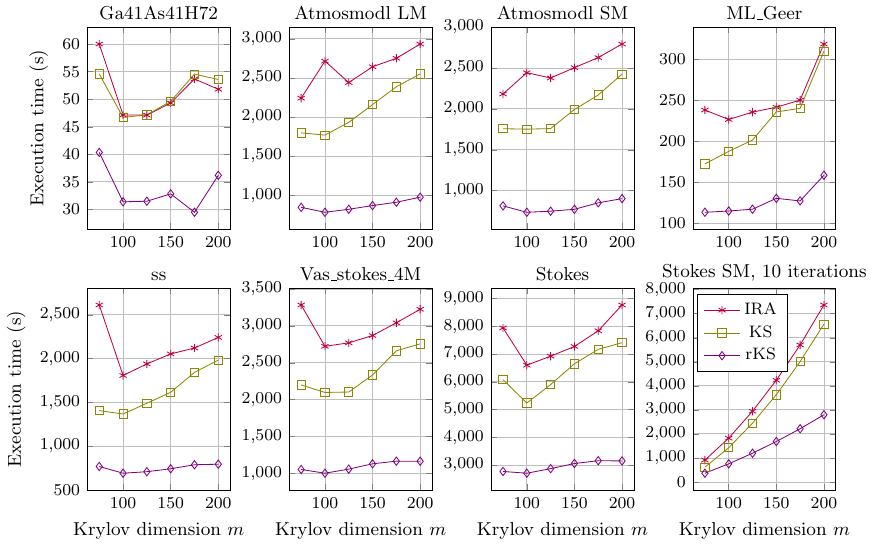}
    \caption{Execution time to find $\sa = 50$ eigenpairs of different matrices summarized in \cref{tab:testMatrices} for an increasing Krylov dimension $m$, from $1.5\sa = 75$ to $4\sa = 200$. The number of iterations taken for each method are reported in \cref{fig:mosaic_iterations}. For all data points, rKS is the fastest method, being in the range of twice as fast as IRA/KS. The rKS method seems also to be less sensitive to the ratio $m/\sa$, that is, it converges in the same amount of time regardless of $m$ compared to a fixed $\sa$. Bottom right plot is a special case where all methods perform 10 iterations.}
    \label{fig:mosaic_timings}
\end{figure}
\begin{figure}
    \centering
    \includegraphics{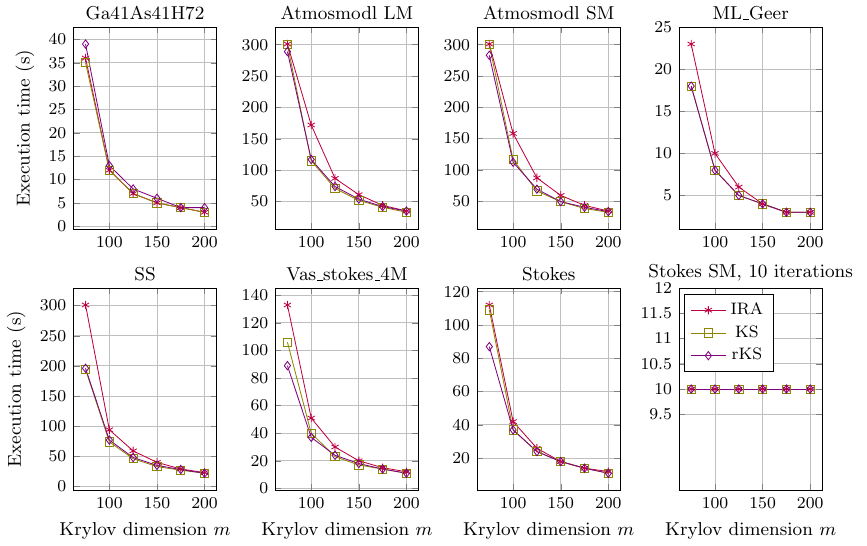}
    \caption{Number of iterations to find $\sa = 50$ eigenpairs of different matrices summarized in \cref{tab:testMatrices} for an increasing Krylov dimension $m$, from $1.5\sa = 75$ to $4\sa = 200$. The execution times for each method are reported in \cref{fig:mosaic_iterations}. All the methods roughly perform the same number of iterations, even though rKS always does the least number of them.}
    \label{fig:mosaic_iterations}
\end{figure}

\section{Conclusion}
In this work, we develop an eigensolver for computing a small number of eigenpairs from large, sparse matrices. The method, called randomized Krylov-Schur, is adapted from the Krylov-Schur algorithm \cite{Stewart2002KrylovSchurAlgorithm}. It leverages sketch orthogonalization techniques, and we showed how the resulting sketch-orthogonal factorizations fit within the framework of Krylov (Schur) decompositions. We demonstrated that sketch-orthonormal Krylov decompositions are equivalent to \rArno{} factorizations \cite{Damas2024RandomizedImplicitlyRestarted,Balabanov2021RandomizedblockGram}, establishing a connection between rKS and randomized Implicitly Restarted Arnoldi (rIRA). As an extension of rIRA, rKS relaxes the structural constraints of randomized Arnoldi factorization, resulting in a simpler implementation while maintaining, or even improving, performance due to stable Schur reordering compared to the less stable shifted QR algorithm.

This framework of sketch-orthonormal Krylov decomposition enables efficient deflation procedures, and we analyzed how these can be implemented and what errors may arise in practice. Numerical experiments confirm that rKS is faster than deterministic eigensolvers, while preserving accuracy.

Future work includes exploring mixed-precision arithmetic within sketch orthogonalization to further reduce computational cost in the large dimension $n$, while maintaining double precision in the sketch dimension. Developing a finite precision analysis of the method would also be a valuable step toward integrating rKS into standard libraries.

\section*{Acknowledgments}
This project has received funding from the European Research
Council (ERC) under the European Union’s Horizon 2020 research and innovation program (grant agreement No 810367).

\bibliographystyle{alpha}
\bibliography{jabref}

\newcommand{\etalchar}[1]{$^{#1}$}
\begin{thebibliography}{MDM{\etalchar{+}}22}

\bibitem[Bal22]{Balabanov2022RandomizedCholeskyQR}
Oleg Balabanov.
\newblock Randomized cholesky qr factorizations, 2022.

\bibitem[BD93]{Bai1993swappingdiagonalblocks}
Zhaojun Bai and James~W. Demmel.
\newblock On swapping diagonal blocks in real schur form.
\newblock {\em Linear Algebra and its Applications}, 186:75--95, June 1993.

\bibitem[BEKS17]{Bezanson2017Juliafreshapproach}
Jeff Bezanson, Alan Edelman, Stefan Karpinski, and Viral~B Shah.
\newblock Julia: A fresh approach to numerical computing.
\newblock {\em SIAM review}, 59(1):65--98, 2017.

\bibitem[BG21]{Balabanov2021RandomizedblockGram}
Oleg Balabanov and Laura Grigori.
\newblock Randomized block gram-schmidt process for solution of linear systems
  and eigenvalue problems, 2021.

\bibitem[BG22]{BalabanovRandomizedGramSchmidt2022}
Oleg Balabanov and Laura Grigori.
\newblock Randomized gram--schmidt process with application to {GMRES}.
\newblock {\em {SIAM} Journal on Scientific Computing}, 44(3):A1450--A1474, jun
  2022.

\bibitem[BGS23]{Burke2023GMRESrandomizedsketching}
Liam Burke, Stefan Güttel, and Kirk~M. Soodhalter.
\newblock Gmres with randomized sketching and deflated restarting, 2023.

\bibitem[Boy10]{Boyd2010RandomizedAlgorithmsMatrices}
Michael W.~Mahoney Boyd.
\newblock Randomized algorithms for matrices and data.
\newblock {\em Foundations and Trends® in Machine Learning}, 3(2):123--224,
  2010.

\bibitem[dDG24]{Damas2024RandomizedImplicitlyRestarted}
Jean-Guillaume de~Damas and Laura Grigori.
\newblock Randomized implicitly restarted arnoldi method for the non-symmetric
  eigenvalue problem, 2024.

\bibitem[DH11]{Davis2011universityFloridasparse}
Timothy~A. Davis and Yifan Hu.
\newblock The university of florida sparse matrix collection.
\newblock {\em ACM Transactions on Mathematical Software}, 38(1):1--25,
  November 2011.

\bibitem[DKM06a]{Drineas2006FastMonteCarlo}
Petros Drineas, Ravi Kannan, and Michael~W. Mahoney.
\newblock Fast monte carlo algorithms for matrices i: Approximating matrix
  multiplication.
\newblock {\em SIAM Journal on Computing}, 36(1):132--157, January 2006.

\bibitem[DKM06b]{Drineas2006FastMonteCarloa}
Petros Drineas, Ravi Kannan, and Michael~W. Mahoney.
\newblock Fast monte carlo algorithms for matrices ii: Computing a low-rank
  approximation to a matrix.
\newblock {\em SIAM Journal on Computing}, 36(1):158--183, January 2006.

\bibitem[GKV24]{GuettelRandomizedsketchingnonlinear2022}
Stefan Güttel, Daniel Kressner, and Bart Vandereycken.
\newblock Randomized sketching of nonlinear eigenvalue problems.
\newblock {\em SIAM Journal on Scientific Computing}, 46(5):A3022--A3043,
  September 2024.

\bibitem[GLR05]{Giraud2005lossorthogonalityGram}
L.~Giraud, J.~Langou, and M.~Rozloznik.
\newblock The loss of orthogonality in the gram-schmidt orthogonalization
  process.
\newblock {\em Computers and Mathematics with Applications}, 50(7):1069--1075,
  October 2005.

\bibitem[Gol13]{Golub2013Matrixcomputations}
Gene~H. Golub.
\newblock {\em Matrix computations}.
\newblock John Hopkins, 2013.

\bibitem[GS23a]{Guettel2023RandomizedSketchingKrylov}
Stefan Güttel and Marcel Schweitzer.
\newblock Randomized sketching for krylov approximations of large-scale matrix
  functions.
\newblock {\em SIAM Journal on Matrix Analysis and Applications},
  44(3):1073--1095, July 2023.

\bibitem[GS23b]{Guettel2023sketchselectArnoldi}
Stefan Güttel and Igor Simunec.
\newblock A sketch-and-select arnoldi process, 2023.

\bibitem[GT24]{Grigori2024RandomizedHouseholderQR}
Laura Grigori and Edouard Timsit.
\newblock Randomized householder qr, 2024.

\bibitem[GvdV00]{Golub2000Eigenvaluecomputation20th}
Gene~H. Golub and Henk~A. van~der Vorst.
\newblock Eigenvalue computation in the 20th century.
\newblock {\em Journal of Computational and Applied Mathematics},
  123(1-2):35--65, nov 2000.

\bibitem[Hig02]{Higham2002Accuracystabilitynumerical}
Nicholas~J Higham.
\newblock {\em Accuracy and stability of numerical algorithms}.
\newblock SIAM, 2002.

\bibitem[HMT11]{HalkoFindingStructureRandomness2011}
N.~Halko, P.~G. Martinsson, and J.~A. Tropp.
\newblock Finding structure with randomness: Probabilistic algorithms for
  constructing approximate matrix decompositions.
\newblock {\em {SIAM} Review}, 53(2):217--288, jan 2011.

\bibitem[JC16]{Jolliffe2016Principalcomponentanalysis}
Ian~T. Jolliffe and Jorge Cadima.
\newblock Principal component analysis: a review and recent developments.
\newblock {\em Philosophical Transactions of the Royal Society A: Mathematical,
  Physical and Engineering Sciences}, 374(2065):20150202, April 2016.

\bibitem[JGMC24]{Jang2024RandomizedflexibleGMRES}
Yongseok Jang, Laura Grigori, Emeric Martin, and Cédric Content.
\newblock Randomized flexible gmres with deflated restarting.
\newblock {\em Numerical Algorithms}, March 2024.

\bibitem[JL{\etalchar{+}}84]{Johnson1984ExtensionsLipschitzmappings}
William~B Johnson, Joram Lindenstrauss, et~al.
\newblock Extensions of lipschitz mappings into a hilbert space.
\newblock {\em Contemporary mathematics}, 26(189-206):1, 1984.

\bibitem[Kre05]{Kressner2005NumericalMethodsGeneral}
Daniel Kressner.
\newblock {\em Numerical Methods for General and Structured Eigenvalue
  Problems}.
\newblock Springer, 2005.

\bibitem[Kre06]{Kressner2006Blockalgorithmsreordering}
Daniel Kressner.
\newblock Block algorithms for reordering standard and generalized schur forms.
\newblock {\em ACM Transactions on Mathematical Software}, 32(4):521--532,
  December 2006.

\bibitem[KXH21]{Kalantzis2021FastRandomizedNon}
Vassilis Kalantzis, Yuanzhe Xi, and Lior Horesh.
\newblock Fast randomized non-hermitian eigensolvers based on rational
  filtering and matrix partitioning.
\newblock {\em {SIAM} Journal on Scientific Computing}, 43(5):S791--S815, jan
  2021.

\bibitem[LMN20]{Lin2020stateartreview}
R.M. Lin, J.E. Mottershead, and T.Y. Ng.
\newblock A state-of-the-art review on theory and engineering applications of
  eigenvalue and eigenvector derivatives.
\newblock {\em Mechanical Systems and Signal Processing}, 138:106536, April
  2020.

\bibitem[LS95]{Lehoucq1995DeflationTechniquesImplicitly}
R~Lehoucq and D~Sorensen.
\newblock Deflation techniques for an implicitly restarted arnoldi iteration.
\newblock {\em SIAM Journal on Matrix Analysis and Applications},
  17(4):789--821, 1995.

\bibitem[LSY98]{Lehoucq1998ARPACKusersguide}
Richard~B Lehoucq, Danny~C Sorensen, and Chao Yang.
\newblock {\em ARPACK users' guide: solution of large-scale eigenvalue problems
  with implicitly restarted Arnoldi methods}.
\newblock SIAM, 1998.

\bibitem[MDM{\etalchar{+}}22]{Murray2022RandomizedNumericalLinear}
Riley Murray, James Demmel, Michael~W Mahoney, N~Benjamin Erichson, Maksim
  Melnichenko, Osman~Asif Malik, Laura Grigori, Piotr Luszczek, Michal
  Derezinski, Miles~E Lopes, et~al.
\newblock Randomized numerical linear algebra: A perspective on the field with
  an eye to software.
\newblock 2022.

\bibitem[MT20]{MartinssonRandomizednumericallinear2020}
Per-Gunnar Martinsson and Joel~A. Tropp.
\newblock Randomized numerical linear algebra: Foundations and algorithms.
\newblock {\em Acta Numerica}, 29:403--572, may 2020.

\bibitem[NT24a]{Nakatsukasa2024FastAccurateRandomized}
Yuji Nakatsukasa and Joel~A. Tropp.
\newblock Fast and accurate randomized algorithms for linear systems and
  eigenvalue problems.
\newblock {\em SIAM Journal on Matrix Analysis and Applications},
  45(2):1183--1214, June 2024.

\bibitem[NT24b]{Nataf2024GenEODomainDecomposition}
Frédéric Nataf and Pierre-Henri Tournier.
\newblock A geneo domain decomposition method for saddle point problems.
\newblock {\em Comptes Rendus. Mécanique}, 351(S1):667--684, April 2024.

\bibitem[Pai05]{Pain2005PhysicsVibrationsWaves}
H.~J. Pain.
\newblock {\em The Physics of Vibrations and Waves}.
\newblock Wiley, April 2005.

\bibitem[RT08]{Rokhlinfastrandomizedalgorithm2008}
Vladimir Rokhlin and Mark Tygert.
\newblock A fast randomized algorithm for overdetermined linear least-squares
  regression.
\newblock {\em Proceedings of the National Academy of Sciences},
  105(36):13212--13217, sep 2008.

\bibitem[Saa03]{Saad2003Iterativemethodssparse}
Yousef Saad.
\newblock {\em Iterative methods for sparse linear systems}.
\newblock SIAM, 2003.

\bibitem[Saa11]{saad2011numerical}
Yousef Saad.
\newblock {\em Numerical methods for large eigenvalue problems: revised
  edition}.
\newblock SIAM, 2011.

\bibitem[SFEL15]{ShahzadehFazeli2015keychoosesubspace}
S.~A. Shahzadeh~Fazeli, Nahid Emad, and Zifan Liu.
\newblock A key to choose subspace size in implicitly restarted arnoldi method.
\newblock {\em Numerical Algorithms}, 70(2):407--426, January 2015.

\bibitem[SLK15]{Saibaba2015Randomizedalgorithmsgeneralized}
Arvind~K. Saibaba, Jonghyun Lee, and Peter~K. Kitanidis.
\newblock Randomized algorithms for generalized hermitian eigenvalue problems
  with application to computing karhunen{\textendash}lo{\`{e}}ve expansion.
\newblock {\em Numerical Linear Algebra with Applications}, 23(2):314--339, nov
  2015.

\bibitem[Sor92]{Sorensen1992ImplicitApplicationPolynomial}
D.~C. Sorensen.
\newblock Implicit application of polynomial filters in a k-step arnoldi
  method.
\newblock {\em {SIAM} Journal on Matrix Analysis and Applications},
  13(1):357--385, jan 1992.

\bibitem[Ste02a]{Stewart2002AddendumKrylovSchur}
G.~W. Stewart.
\newblock Addendum to a krylov--schur algorithm for large eigenproblems.
\newblock {\em SIAM Journal on Matrix Analysis and Applications},
  24(2):599--601, January 2002.

\bibitem[Ste02b]{Stewart2002KrylovSchurAlgorithm}
G.~W. Stewart.
\newblock A krylov--schur algorithm for large eigenproblems.
\newblock {\em {SIAM} Journal on Matrix Analysis and Applications},
  23(3):601--614, jan 2002.

\bibitem[SW25]{Simoncini2025StabilizedKrylovSubspace}
Valeria Simoncini and YiHong Wang.
\newblock Stabilized krylov subspace recurrences via randomized sketching.
\newblock {\em Numerical Linear Algebra with Applications}, 32(3), May 2025.

\bibitem[TGB23]{Timsit2023RandomizedOrthogonalProjection}
Edouard Timsit, Laura Grigori, and Oleg Balabanov.
\newblock Randomized orthogonal projection methods for krylov subspace solvers,
  2023.

\bibitem[Wil88]{Wilkinson1988algebraiceigenvalueproblem}
J.~H. Wilkinson.
\newblock {\em The algebraic eigenvalue problem}.
\newblock Clarendon Press, 1988.

\bibitem[Woo14]{WoodruffComputationalAdvertisingTechniques2014}
David~P. Woodruff.
\newblock Sketching as a tool for numerical linear algebra.
\newblock {\em Foundations and Trends{\textregistered} in Theoretical Computer
  Science}, 10(1-2):1--157, 2014.

\bibitem[Yin16]{Yin2016randomizedFEASTalgorithm}
Guojian Yin.
\newblock A randomized feast algorithm for generalized eigenvalue problems,
  2016.

\end{thebibliography}

\end{document}